\newcommand{\MyQuote}[1]{\vspace{0cm}
	\parbox{15cm}{\em #1}\hspace*{2cm}\\[0cm]}
\newenvironment{mylist}{
\begin{description}[style=multiline, leftmargin=1.7cm,font=\normalfont]
	  \setlength{\itemsep}{2pt}
	\setlength{\parskip}{1pt}
}
{\end{description} }
\newenvironment{property}
{\noindent\begin{description}[style=multiline, leftmargin=0.085\linewidth,font=\normalfont]
	}
	{\end{description}}
\def\PP{{\mathcal P}}
\def\QQ{{\mathcal Q}}
\def\RR{{\mathcal R}}
\def\MT{{\mathbb T}}
\def\ML{{\mathbb L}}
\def\MP{{\mathbb P}}
\def\halnospace{\hfill $\triangle$}
\newenvironment{prf}[1]{\noindent\textbf{Proof of Claim~{#1}:}}{\hfill\halnospace}
\newtheorem{theo}{Theorem}
\newtheorem{theorem}{Theorem}[section]
\newtheorem{lemma}[theorem]{Lemma}
\newtheorem{conj}[theorem]{Conjecture}
\newtheorem{prop}[theorem]{Proposition}
\newtheorem{obs}[theorem]{Observation}
\newtheorem{claim}[theorem]{Claim}
\title{On the Connectivity of Token Graphs of Trees}
\author{
	Ruy Fabila-Monroy\affiliationmark{1}\thanks{Partially supported by CONACYT (Mexico), grant 253261.}
	\and
	Jes\'us Lea\~nos\affiliationmark{2}
	\and 
	Ana Laura Trujillo-Negrete\affiliationmark{1}\thanks{Partially supported by CONACYT (Mexico), Convocatoria 2021 de Estancias Posdoctorales por M\'exico en Apoyo por SARS-CoV-2 (COVID-19).}
}
\affiliation{Departamento de Matem\'aticas, Cinvestav, CDMX, Mexico.\\
	Unidad Acad\'emica de Matem\'aticas, Universidad Aut\'onoma de Zacatecas, Zacatecas, Mexico.}
\keywords{token graphs, connectivity, trees}
\begin{document}

\publicationdetails{24}{2022}{1}{9}{7538}
\maketitle

\begin{abstract}
  Let $k$ and $n$ be integers such that  $1\leq k \leq n-1$, and let $G$ be a simple graph of order $n$.
  The $k$--token graph $F_k(G)$ of $G$ is the graph whose vertices are the $k$-subsets of $V(G)$, where two vertices 
  are adjacent in $F_k(G)$ whenever their symmetric difference is an edge of $G$. 
  In this paper we show that if $G$ is a tree, then the connectivity of $F_k(G)$ is equal to the minimum degree of 
  $F_k(G)$.
\end{abstract}

\section{Introduction}

Throughout this paper, $G$ is a simple finite graph of order $n\geq 2$ and
$k\in \{1, \ldots , n-1\}$. The \emph{$k$-token graph} $F_k(G)$ of $G$ is the graph whose vertices are all the $k$-subsets of 
$V(G)$, where two $k$-subsets are adjacent whenever their symmetric difference is a pair of adjacent vertices in $G$.
We often write token graph instead of $k$-token graph. See Figure~\ref{figs:example_tokengraph} for an example. 

\begin{figure}[t]
	\begin{center}
	\includegraphics[width=0.6\textwidth]{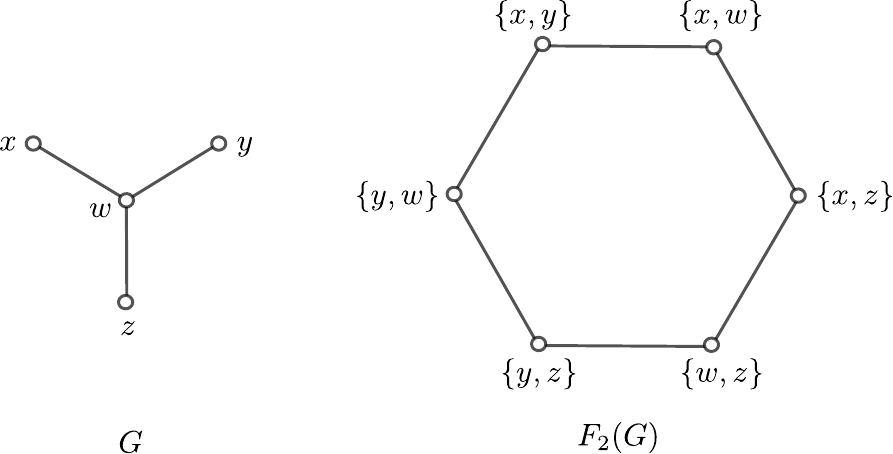}
	\caption{A graph $G$ and its $2$--token graph $F_2(G)$.}
	\label{figs:example_tokengraph}
\end{center}
\end{figure}

The study of  token graphs probably started with 
\cite{distance} PhD Thesis,
in which $F_k(G)$ was called
the  \emph{$k$-subgraph graph} of $G$ and some results concerning the diameter of $F_k(G)$ were reported. Since then, 
token graphs have been defined independently at least three more times.

\cite{double_1} defined $F_2(G)$ and call it the \emph{double vertex graph} of $G$, and a year later, \cite{alavi2} 
generalized the concept for $k\in \{2,\ldots ,n-1\}$ under the name of \emph{$k$-tuple vertex graph} of $G$. 
In these two papers, the authors studied several combinatorial issues of $F_k(G)$ such as Eulerianicity, Hamiltonicity, connectivity, regularity, etc. 	

This concept was reintroduced for third time by \cite{rudolph}, when some connections of $F_k(G)$ with quantum mechanics
and the graph isomorphism problem were discussed. Regarding the quantum mechanics, Rudolph used $F_k(G)$ to model the evolution
of a cluster of $n$ interacting qubits ($2$-level atoms), which must have exactly $k$ qubits in excited state at any time 
(the $n$ qubits are the vertices of $G$ and their interactions define the edges of $G$). The use of $F_k(G)$ in this 
direction is still of interest, see~\cite{barghi,alzaga,yingkai}. For instance, ~\cite{yingkai} showed that 
$F_k(G)$ has applications in the Heisenberg model, which is a quantum theory of magnetism.

With respect to the graph isomorphism problem, \cite{rudolph} found pairs of cospectral graphs $G$ and $H$ such that  
$F_2(G)$ and  $F_2(H)$ are not cospectral, implying that $G$ and $H$ are not isomorphic. Following Rudolph's 
approach, 
\cite{rudolph2} showed the existence of pairs of non-isomorphic 
cospectral graphs whose corresponding $2$-token graphs are cospectral. 
A few years later, 
\cite{barghi}, 
and independently, 
\cite{alzaga}, showed that for any $k\in \mathbb{Z}^+$, there exist 
infinitely many pairs of non-isomorphic graphs whose corresponding $k$-token graphs are cospectral. In 
\cite{rudolph}
$F_k(G)$ was originally called the \emph{$k$-level matrix} of $G$, but in
\cite{rudolph2} $F_k(G)$ was renamed as 
the \emph{symmetric $k$-th power} of $G$.

As far as we know,~\cite{fabila2012tokengraphs} is the last paper in which $F_k(G)$ has been defined, under the 
name of the \emph{$k$-token graph} of $G$. In that paper, Fabila-Monroy, Flores-Pe\~naloza, Huemer, Hurtado, Urrutia, 
and Wood defined $F_k(G)$ as  ``a model in which $k$ indistinguishable tokens 
move from vertex to vertex along the edges of a graph" and showed several results on the connectivity, diameter, 
chromatic and clique numbers, and Hamiltonian paths. From this last definition of $F_k(G)$, it is not hard to see 
that the estimation of any parameter involving connectivity or the determination of the distance between vertices 
of $F_k(G)$ can be seen as a reconfiguration problem. We recall that reconfiguration problems are a family of 
combinatorial problems that ask if there exists a step-by-step transformation between two feasible solutions of 
a problem such that all intermediate results are also feasible. For two specific examples of theses connections 
we refer the reader to~\cite{demaine,yama}.  

In 2017 Sloane\footnote{https://oeis.org/A085680} observed that the problem of determining the maximum size of 
a binary code of length $n$ and constant weight $2$ that can correct a single adjacent transposition is equivalent 
to determining the packing number of $F_2(P_{n})$, where $P_n$ is the path graph of order $n$. 
\cite{gomez} 
solved this problem.


Token graphs are also a generalization of Johnson graphs: if $G$ is the complete graph of order $n$, then $F_k(G)$ is isomorphic to the
Johnson graph $J(n,k)$. Johnson graphs have been widely studied; the analysis of many of its 
combinatorial properties is an active area of research (see for instance~\cite{alavi,brouwer,riyono,etzion,terwilliger}).


The following approach has been applied in several papers such as~\cite{fabila2012tokengraphs,alba,gomez,token2,leanos2018connectivity,leanos2019edgeconnectivity}.

\MyQuote{
	For a given graph invariant $\eta$, what can be said of $\eta(F_k(G))$ in terms of $G$ and $\eta(G)$?}
In particular, \cite{fabila2012tokengraphs} gave families of graphs of order $n$ with connectivity exactly $t$, and whose
$k$-token graphs have connectivity exactly $k(t-k+1)$, whenever $k \leq t$;  they also conjectured that 
if $G$ is $t$-connected and $k \leq t$, then $F_k(G)$ is at least  $k(t-k+1)$-connected. This was proven by \cite{leanos2018connectivity}.
Recently, a similar lower bound was proven for edge-connectivity by \cite{leanos2019edgeconnectivity}; they showed 
that if $G$ is $t$-edge-connected and $k\leq t$ then $F_k(G)$ is at least $k(t - k + 1)$-edge-connected. 
Infinite families of graphs  attaining this lower bound were also given.

In this paper we study the connectivity and edge-connectivity of $F_k(G)$  when $G$ is a tree.
As usual let $\kappa(G)$, $\lambda(G)$, and $\delta(G)$ be the \emph{connectivity, edge-connectivity,} and  
\emph{minimum degree} of $G$, respectively. 
It is well known that if $G$ is connected then
\begin{equation} \label{ineq1}
	\kappa(G)\leq \lambda(G)\leq \delta(G).
\end{equation}
The main result of this paper is the following. 
\begin{theo}\label{thm:main}
	If $G$ is a tree of order $n$ and $1\leq k\leq n-1$ then 
	\begin{displaymath}
		\kappa(F_k(G))= \lambda(F_k(G)) = \delta(F_k(G)).
	\end{displaymath}
\end{theo}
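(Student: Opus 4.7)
By inequality (\ref{ineq1}) it suffices to prove the lower bound $\kappa(F_k(G)) \ge \delta(F_k(G))$. The plan is to proceed by induction on the order $n$ of the tree $G$, with trivial base cases $n=2$ and $k=1$ (where $F_1(G) = G$ is itself a tree, so $\kappa = \delta = 1$).

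For the inductive step, fix a leaf $\ell$ of $G$ with unique neighbor $u$, and let $G' := G - \ell$, a tree of order $n-1$. Partition $V(F_k(G)) = \mathcal{A} \sqcup \mathcal{B}$, where $\mathcal{A}$ consists of the $k$-subsets containing $\ell$ and $\mathcal{B}$ of those avoiding it. A direct check shows that $F_k(G)[\mathcal{A}] \cong F_{k-1}(G')$ (via $A \mapsto A \setminus \{\ell\}$) and $F_k(G)[\mathcal{B}] \cong F_k(G')$, with $|\mathcal{B}| = 1$ in the boundary case $k = n-1$. Since $\{\ell,u\}$ is the only edge of $G$ at $\ell$, the edges of $F_k(G)$ between $\mathcal{A}$ and $\mathcal{B}$ form a matching $M$ that pairs each $A \in \mathcal{A}$ having $u \notin A$ with $(A \setminus \{\ell\}) \cup \{u\} \in \mathcal{B}$, so $|M| = \binom{n-2}{k-1}$. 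Applying the inductive hypothesis to the smaller tree $G'$ with parameters $k-1$ and $k$ yields $\kappa = \delta$ on both induced halves.

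Suppose now, for contradiction, that $S \subseteq V(F_k(G))$ is a vertex cut with $|S| < \delta(F_k(G))$, and write $S_\mathcal{A} = S \cap \mathcal{A}$, $S_\mathcal{B} = S \cap \mathcal{B}$. Let $C$ be a connected component of $F_k(G) - S$. If $C$ lies entirely in $\mathcal{A}$ (the case $\mathcal{B}$ is symmetric), two subcases arise: either $S_\mathcal{A}$ already disconnects $F_k(G)[\mathcal{A}] \cong F_{k-1}(G')$, in which case induction forces $|S_\mathcal{A}| \ge \delta(F_{k-1}(G'))$ and a short comparison of degrees across the matching contradicts $|S| < \delta(F_k(G))$; or $C = \mathcal{A} \setminus S_\mathcal{A}$ and then $S_\mathcal{B}$ must contain every $M$-partner of the vertices of $C$ meeting the matching, yielding $|S| \ge \binom{n-2}{k-1} \ge \delta(F_k(G))$, again a contradiction.

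The main obstacle is the remaining case, where $C$ meets both $\mathcal{A}$ and $\mathcal{B}$: here $S$ need not cut either half individually, and the separation is enforced collectively, combining destruction of matching edges with the internal connectivities of the two halves. The plan there is to choose a minimum-degree vertex $A_0$ of $F_k(G)$ and construct $\delta(F_k(G))$ internally-disjoint $A_0$-paths to any vertex in another component of $F_k(G) - S$, using the induced halves' connectivity to route paths inside $\mathcal{A}$ or $\mathcal{B}$ and the matching $M$ to switch between them, so that a set of size less than $\delta(F_k(G))$ cannot block all of them. Carrying out this path-construction uniformly --- in particular exploiting the identity $\delta(F_k(G)) = \min(\delta_\mathcal{A}, \delta_\mathcal{B})$ and the fact that each vertex of $\mathcal{A}$ (resp.\ $\mathcal{B}$) gains at most one unit of degree over its degree inside its half --- is the delicate point of the induction; once it is settled, (\ref{ineq1}) immediately upgrades the conclusion to equal edge-connectivity.
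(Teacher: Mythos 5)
Your leaf-deletion induction is a genuinely different strategy from the paper's (which reduces, via Menger, to showing $\delta(F_k(T))$ internally disjoint paths between vertices at distance $2$ and then constructs them explicitly), but as written it has a real gap: the case you yourself flag as ``the main obstacle'' --- a cut $S$ with $|S|<\delta(F_k(G))$ such that every component of $F_k(G)-S$ meets both $\mathcal{A}$ and $\mathcal{B}$ --- is only a plan, not an argument. This is precisely the situation where neither inductive hypothesis applies directly and where all the difficulty lives; ``route paths inside the halves and switch via $M$'' does not by itself explain why fewer than $\delta(F_k(G))$ vertices cannot simultaneously sever the needed matching edges and the needed routes in each half. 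Until that case is carried out, the theorem is not proved.

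There are also quantitative slips in the parts you do argue. The degree of $A\in\mathcal{A}$ in $F_k(G)$ equals its degree in $F_k(G)[\mathcal{A}]\cong F_{k-1}(G')$ plus $1$ exactly when $u\notin A$, so one only gets $\delta(F_{k-1}(G'))\ge \delta(F_k(G))-1$, not $\ge\delta(F_k(G))$; hence in your first subcase the bound $|S_\mathcal{A}|\ge\delta(F_{k-1}(G'))$ leaves a deficit of one that must be recovered from $S_\mathcal{B}$, and this requires ruling out the configuration where the stranded component consists entirely of sets containing $u$ (no matching partners). For the same reason the ``identity'' $\delta(F_k(G))=\min(\delta_\mathcal{A},\delta_\mathcal{B})$ is not correct as stated. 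Finally, note that $\binom{n-2}{k-1}\ge\delta(F_k(G))$ in your second subcase is an assertion about token graphs of trees that itself needs proof. None of these is necessarily fatal --- an induction of this shape can likely be pushed through --- but the submitted text is a proof outline with its hardest step and several supporting inequalities unverified, whereas the paper's route avoids induction entirely by exhibiting, for each pair $X,Y$ at distance $2$, an explicit family of $\delta(F_k(T))$ internally disjoint $X$--$Y$ paths built from distractor moves, together with a matching upper bound on $\delta(F_k(T))$.
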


We remark that while the hypothesis $k\leq \kappa(G)$ has played a central role in both results on 
$\kappa(F_k(G))$ stated in~\cite{fabila2012tokengraphs,leanos2018connectivity}, this hypothesis does not hold when $G$ is a tree;
this absence is responsible for the new difficulties in the proof of Theorem~\ref{thm:main}. 

We now recall some standard notation which is used throughout this paper. Let $u$ and $v$ be distinct vertices of $G$. 
The distance between $u$ and $v$ in $G$ is denoted by $d_G(u,v)$ (we sometimes write $d(u,v)$ when $G$ is understood from the context); 
we write $uv$ to mean that $u$ and $v$ are adjacent.
The \emph{neighbourhood} of $v$ in $G$ is the set $\{u\in V(G): uv\in E(G)\}$ and  it is  denoted by $N_G(v)$. 
The \emph{degree} of $v$ is the number $\deg_G(v):=|N_G(v)|$. 
The number  $\delta(G):=\min\{deg_G(v): v\in V(G)\}$ is the \emph{minimum degree} of $G$.
A \emph{$u-v$ path of} $G$ is  starting at $u$ and ending in $v$. 
Let $U$ and $W$ be subsets of $V(G)$. We use: $G\setminus W$ to denote the subgraph of $G$ that results by removing $W$ from $G$;
$U\setminus W$ to denote set subtraction; and $U\triangle W$ to denote symmetric difference. For brevity, if $m$ is a positive integer, 
then we use $[m]$ to denote 
$\{1,\ldots,m\}$. We follow the convention that $[m]=\emptyset$ for $m=0$. 

The rest of the paper is organized as follows. In Section~\ref{sec:constructions} we establish several ways 
to construct paths in $F_k(G)$ which come from the concatenation of certain paths of $G$. 
These paths of $F_k(G)$ play a central role in our constructive proof of Theorem~\ref{thm:main}. 
In Section~\ref{sec:basicfacts} we give some basic results on the connectivity structure of $F_k(G)$ which 
help us to simplify significantly the proof of Theorem~\ref{thm:main}. Finally, in Section~\ref{sec:proof} 
we prove Theorem~\ref{thm:main}.


\subsection{Constructing Paths of $F_k(G)$ from Paths of $G$}\label{sec:constructions}

In this section we construct paths in $F_k(G)$ using a given set of paths of $G$.
For this purpose, we find it useful to use the following interpretation of $F_k(G)$ 
given by~\cite{fabila2012tokengraphs}. We consider that there are
$k$ indistinguishable tokens placed at the vertices of $G$ (at most one token per vertex).
A vertex of $F_k(G)$ corresponds to one of this token configurations. Two such configurations are adjacent in 
$F_k(G)$ if and only if one configuration can be reached from the other by moving one token along an edge of $G$ 
from its current vertex to an unoccupied vertex. These token moves are called \emph{admissible moves}. 
Under this interpretation, if $A$ and $B$ are two distinct $k$-subsets of $V(G)$ then a path in $F_k(G)$ 
with endvertices $A$ and $B$ corresponds to a finite sequence of token configurations that are produced by 
a corresponding sequence of admissible moves. With this in mind, now we explain how to produce some paths 
of $F_k(G)$ from a certain set of paths of $G$.

Let $P:= a_0a_1a_2\ldots a_m$ be an $a-b$ path of $G$ ($a_0=a$ and $a_m=b$); let $A,B\in V(F_k(G))$ be such that 
$A\triangle B=\{a,b\}$, $P\cap A=\{a\}$ and $P\cap B=\{b\}$.
A natural way of constructing an $A-B$ path $\PP$ in $F_k(G)$ using $P$ is by moving the token at $a$ along $P$ to $b$.
More precisely, we start at $A$, then for each $i=0,1,\ldots ,m-1$, we move  (in this order)
the token at $a_i$ along the edge $a_ia_{i+1}$ to the vertex $a_{i+1}$. We denote this sequence of admissible token moves by
\begin{displaymath}
	a_0\longrightarrow a_1\longrightarrow a_2\cdots \longrightarrow a_m.
\end{displaymath}

Clearly, the first and last configurations of this sequence correspond to the vertices $A$ and $B$ of $F_k(G)$, respectively.
Moreover, note that if $A_0=A, A_m=B$, and $A_i=(A_{i-1}\setminus \{a_{i-1}\})\cup \{a_i\}$ for $i\in [m]$, then $\PP=AA_1A_2\ldots A_{m-1}B$.
We refer to $\PP$ as the \emph{path of $F_k(G)$ induced by} $P$. See Figure \ref{figs:type1_case1}.
Let $\QQ$ be a path of $F_k(G)$ and let $\{Q_0,Q_1,\ldots ,Q_m\}$ be its vertex set. Since each of these $Q_i$'s is a $k$-set 
of $V(G)$, then $q:=k-|\cap_{i=0}^mQ_i|$ is well defined. We  say that $\QQ$ is a path \emph{of Type $q$}. Thus, $\PP$
and any edge of $F_k(G)$ are examples of paths of Type 1. 

\begin{figure}[h!]
	\begin{center}
	\includegraphics[width=0.7\textwidth]{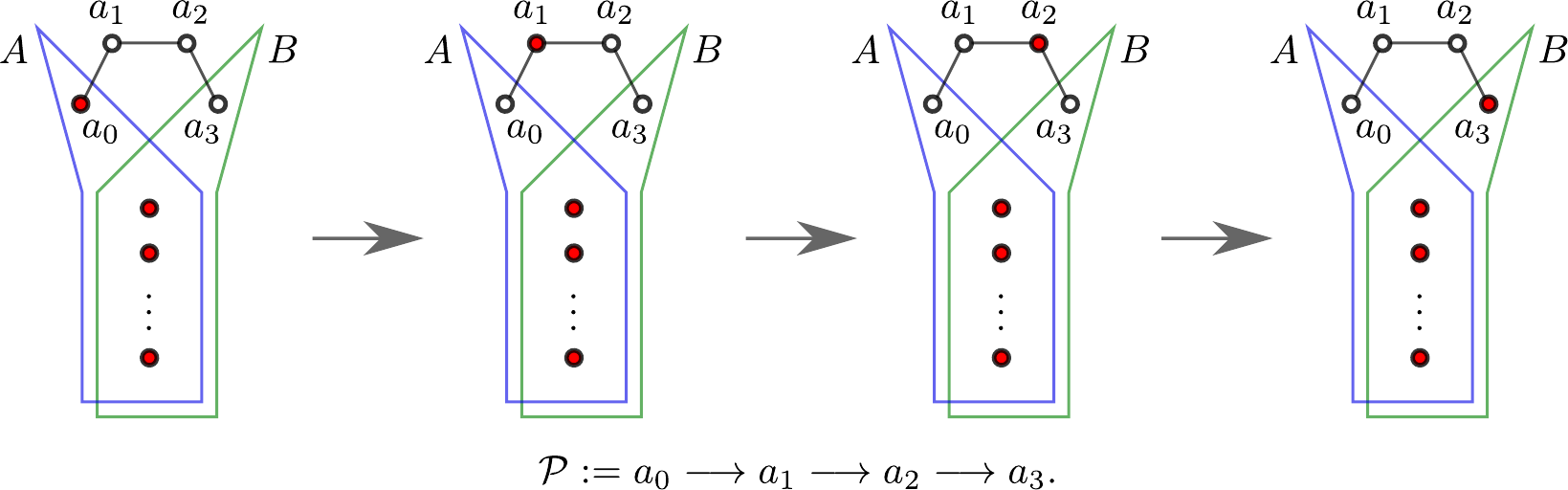}
	\caption{Four configurations of $G$. The set of red vertices of $G$ defining the left (respectively, right) configuration
		corresponds to the vertex $A$  (respectively, $B$) of $F_k(G)$. These four configurations together 
		(from left to right) define an $A-B$ path $\PP$ of $F_k(G)$. The path $\PP$ is induced by $P=a_0a_1a_2a_3$, 
		because the token at $a_0$ is moving along $P$ to $a_3$. Since the remaining $k-1$ tokens are fixed on $A\cap B$, $\PP$ is of Type 1.}	
	\label{figs:type1_case1}
\end{center}
\end{figure}

We now define certain paths of Type 2.
Let $e_1=a_1b_1$ and $e_2=a_2b_2$ be independent edges of 
$G$, and let $A,B\in F_k(G)$ be such that $A\setminus B=\{a_1,a_2\}$ and $B\setminus A=\{b_1,b_2\}$. A simple way to construct
an $A-B$ path $\RR$ of Type 2 (and length 2) is by moving the token at $a_1$ to $b_1$ along $e_1$, 
and then, by moving the token at $a_2$ to $b_2$ along $e_2$. We denote this construction by
\begin{displaymath}
	a_1\longrightarrow b_1;a_2\longrightarrow b_2.
\end{displaymath}

Then $\RR=A_0A_1A_2$, where $A_0=A$, $A_1=(A_0\setminus \{a_1\})\cup \{b_1\}$, 
$A_2=(A_1\setminus \{a_2\})\cup \{b_2\}=B$ (see Figure~\ref{figs:tentative-1}). We remark that $\RR$ can be seen as the
concatenation of two paths of Type 1, namely those corresponding to $a_1\longrightarrow b_1$ and $a_2\longrightarrow b_2$. 
As suggested above, we  use a semicolon `` ; '' to denote the concatenation of paths of Type 1. 

\begin{figure}[h!]
	\begin{center}	
	\includegraphics[width=0.62\textwidth]{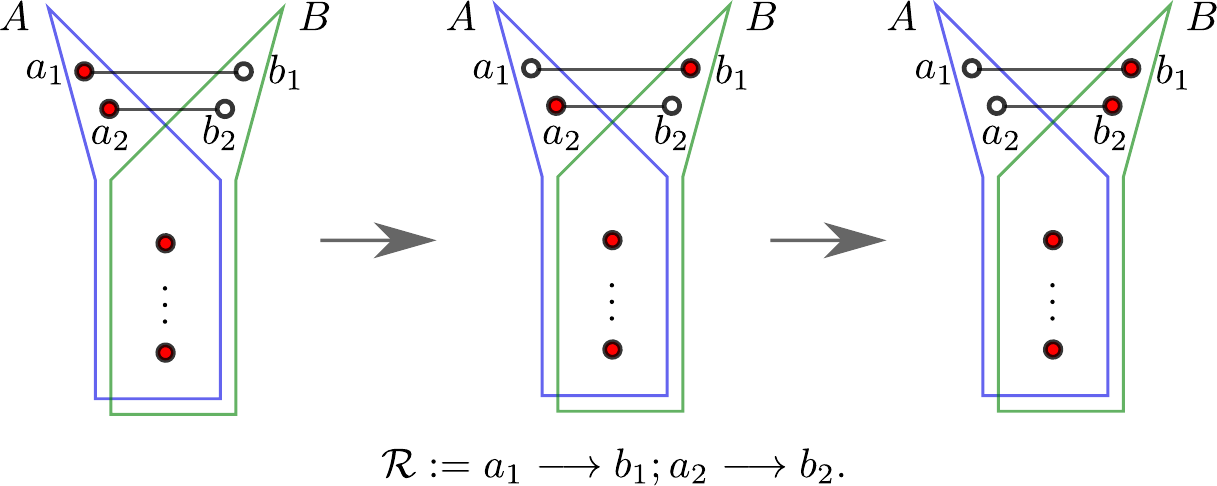}
	\caption{An $A-B$ path of Type 2.}
	\label{figs:tentative-1}
	\end{center}
\end{figure}

Now, suppose that $A$ and $B$ are adjacent vertices in $F_k(G)$ with 
$A\setminus B:=\{a\}$ and $B\setminus A:=\{b\}$. Then $ab$ is an edge of $G$.
Let $u$ and $v$ be adjacent vertices of $G$ such that $u\in A\cap B$ and $v\in V(G)\setminus(A\cup B)$. 
As we have seen above, a way to produce an $A-B$ path $\PP$ is simply by moving the token at $a$ to $b$ along the edge $ab$.
Now we use a simple trick, involving the edges $uv$ and $ab$, to produce a new 
$A-B$ path $\PP_{uv}$ of $F_k(G)$ that is internally disjoint from $\PP$. The path $\PP_{uv}$ is constructed as follows. 
First we move the token at $u$ to $v$ along $uv$, and then we move the token at $a$ to $b$ along $ab$, and finally 
we move back the token at $v$ to $u$ along $uv$. Clearly, each of these moves is admissible and they together define 
the required $\PP_{uv}$ path, which we denote by:   
\begin{displaymath}
	u\longrightarrow v; a\longrightarrow b; v\longrightarrow u.
\end{displaymath}

We say that the vertex $v$ is playing the role of a \emph{distractor}, which allow us to  produce  a new path 
$P_{uv}$ from $\PP$ and $uv$. See Figure~\ref{figs:distractor}.

We now generalize the above construction. Suppose that $\PP$ is an $A-B$ path of $F_k(G)$ and that $uv$ is an edge 
of $G$ with $u\in A\cap B$ and $v\in V(G)\setminus(A\cup B)$. 
If $u\in I$ and $v\notin I$ for any internal vertex $I$ of $\PP$, then we can get a new $A-B$ path
$P_{uv}$ from $\PP$ and $uv$ as follows. First move the token at $u$ to $v$ along $uv$. Then, keeping the 
token at $v$ fixed, move the tokens from the vertices in $A\setminus B$ to the vertices in $B\setminus A$ 
according to $\PP$, and finally move back the token at the distractor $v$ to the initial vertex $u$. Note that 
at the end we have produced an $A-B$ path $\PP_{uv}$ with the following property: for each inner vertex $J$ of 
$\PP_{uv}$, we have that $v\in J$ and $u\notin J$. This implies that if $u'v'$ is an edge of $G\setminus \{uv\}$ 
satisfying the same properties as $uv$ with respect to $\PP$, then the corresponding path $\PP_{u'v'}$ is an $A-B$ 
path internally disjoint from both $\PP$ and $\PP_{uv}$.
The paths produced in this way play an important role in the proof of Theorem~\ref{thm:main}.

\begin{figure}[h]
	\begin{center}
	\includegraphics[width=0.8\textwidth]{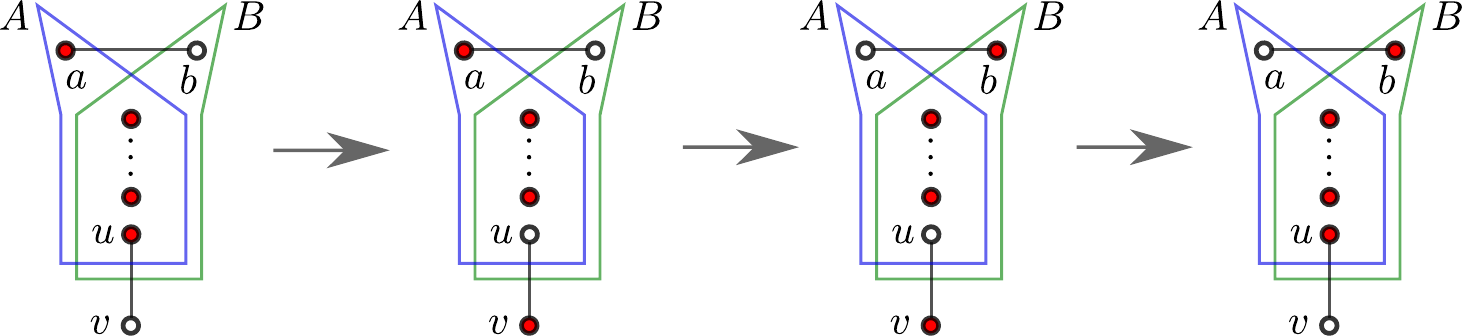}
	\caption{An $A-B$ path $\PP_{uv}$ with distractor $v$.}
	\label{figs:distractor}
\end{center}
\end{figure}


\subsection{Some basic facts}\label{sec:basicfacts}

In this section we prove auxiliary results that are used in the proof of Theorem~\ref{thm:main}.

\begin{prop}
	\label{prop:restriction}
	Let $H$ be a connected graph. Then $H$ is $t$-connected if and only if $H$ has 
	$t$ pairwise internally disjoint $a-b$ paths, for any two vertices $a$ and $b$ of $H$ such that $d_H(a,b)=2$.
\end{prop}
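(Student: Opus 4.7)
The plan is to prove each direction separately. The forward implication is essentially Menger's theorem: if $H$ is $t$-connected, then between any two non-adjacent vertices of $H$ there exist $t$ pairwise internally disjoint paths, and a pair at distance exactly $2$ is in particular non-adjacent. The content of the proposition therefore lies in the reverse direction.

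For the converse I would argue by contradiction. Assume the distance-$2$ hypothesis but suppose $H$ is not $t$-connected; take $S$ to be a \emph{minimum} vertex cut of $H$, so $|S|<t$, and let $C_1,C_2$ be two distinct components of $H\setminus S$. The only fact beyond Menger that I need is the standard observation that every vertex of a minimum cut has at least one neighbor in every component of $H\setminus S$: if some $s\in S$ had no neighbor in a component $C$, then $S\setminus\{s\}$ would still separate $C$ from the rest of $H$, contradicting the minimality of $S$.

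Now fix any $s\in S$ and choose neighbors $a\in C_1$ and $b\in C_2$ of $s$. Since $a$ and $b$ lie in different components of $H\setminus S$ they are non-adjacent in $H$; but they share the common neighbor $s$, so $d_H(a,b)=2$. By the hypothesis there are $t$ pairwise internally disjoint $a-b$ paths in $H$, and each such path begins in $C_1$ and ends in $C_2$, so it must use at least one internal vertex from $S$. Pairwise internal disjointness forces these internal vertices to be distinct, yielding $|S|\geq t$, which contradicts $|S|<t$.

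I do not anticipate any substantive obstacle; the argument is essentially Menger's theorem plus the minimum-cut observation plus a one-line pigeonhole on the $t$ paths. The only subtlety worth flagging is the implicit assumption that $H$ contains at least one pair of vertices at distance $2$ (otherwise the hypothesis is vacuous and the statement would fail for complete graphs with $t$ larger than their connectivity), but this is unproblematic in every application of the proposition in the paper, where $H=F_k(G)$ has ample diameter.
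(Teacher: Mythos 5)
Your proof is correct and follows essentially the same route as the paper's: Menger's theorem for the forward direction, and for the converse a minimum vertex cut $U$, the observation that each $u\in U$ has a neighbour in every component of $H\setminus U$, which produces a distance-2 pair whose $t$ internally disjoint paths must each pass through $U$. Your side remark about complete graphs (where the distance-2 hypothesis is vacuous) is a genuine edge case that the paper's proof silently assumes away by positing that a vertex cut exists, and it is indeed harmless in the paper's application.
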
 
\begin{proof} The forward implication follows directly from Menger's Theorem. Conversely, let $U$ be a vertex 
	cut of $H$ of minimum order. 	Let $H_1$ and $H_2$ be two distinct components  of $H-U$, and let $u\in U$. 
	Since $U$ is a minimum cut, then $u$ has at least a neighbour $v_i$ in $H_i$, for $i=1,2$. Then $d_H(v_1,v_2)=2$. 
	By hypothesis, $H$ has $t$ pairwise internally disjoint $v_1-v_2$ paths. Since each of these 
	$t$ paths intersects $U$, then we have that $|U|\geq t$, as required. 
\end{proof}


\begin{prop} \label{prop:cases}
	Let $X$ and $Y$ be vertices of $F_k(G)$ with $d_{F_k(G)}(X,Y)=2$. Then the following hold: 
	\begin{enumerate}[1{)}]
		\item $|X\cap Y|=k-1$ or $|X\cap Y|=k-2$,
		
		\item If $|X\cap Y|=k-2$, then $G$ has two independent edges $x_1y_1$ and $x_2y_2$ such that  
		$X\setminus Y=\{x_1,x_2\}$ and $Y\setminus X=\{y_1,y_2\}$. 
		
		\item If $|X\cap Y|=k-1$, then $G$ has two vertices $x$ and $y$ at distance two in $G$ such that 
		$X\setminus Y=\{x\}$ and $Y\setminus X=\{y\}$. 
	\end{enumerate}	
\end{prop}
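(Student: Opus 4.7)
The plan is to expose a length-two witness path in $F_k(G)$ and translate the two edge conditions back into vertex-level information. Concretely, since $d_{F_k(G)}(X,Y)=2$, fix an intermediate vertex $Z$ with $XZ, ZY \in E(F_k(G))$. By the definition of the token graph there exist edges $ab, cd \in E(G)$ with $X \triangle Z = \{a,b\}$ and $Z \triangle Y = \{c,d\}$, and I orient the labels so that $a \in X\setminus Z$, $b \in Z\setminus X$, $c \in Z\setminus Y$, $d \in Y\setminus Z$. Thus
\[
X = (Z\setminus\{b\})\cup\{a\}, \qquad Y = (Z\setminus\{c\})\cup\{d\},
\]
where $a,d \notin Z$ and $b,c \in Z$. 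From this alone one reads off $a\neq b$, $a\neq c$, $d\neq b$, $d\neq c$, so the only possible coincidences among $a,b,c,d$ are $a=d$ or $b=c$.

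Next I split into cases based on those coincidences. If $a,b,c,d$ are all distinct, then a direct computation gives $X\setminus Y=\{a,c\}$, $Y\setminus X=\{b,d\}$ and $X\cap Y=Z\setminus\{b,c\}$, so $|X\cap Y|=k-2$; the edges $ab$ and $cd$ then involve four distinct vertices and are therefore independent, which yields item (2) with $(x_1,y_1,x_2,y_2)=(a,b,c,d)$. If instead $a=d$, then $b=c$ would force $X=Y$, contradicting $d_{F_k(G)}(X,Y)=2$, so $b\neq c$; one checks $X\setminus Y=\{c\}$ and $Y\setminus X=\{b\}$, whence $|X\cap Y|=k-1$. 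The symmetric case $b=c$ (with $a\neq d$) gives $X\setminus Y=\{a\}$ and $Y\setminus X=\{d\}$, again with $|X\cap Y|=k-1$.

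The remaining step for item (3) is to upgrade "common neighbour" to "distance exactly two in $G$." In the case $a=d$, both $b$ and $c$ are adjacent to $a$ in $G$, so $d_G(b,c)\leq 2$; but if $bc\in E(G)$ then $X\triangle Y=\{b,c\}$ would be an edge of $G$, making $XY$ an edge of $F_k(G)$ and contradicting $d_{F_k(G)}(X,Y)=2$. Hence $d_G(b,c)=2$. The case $b=c$ is symmetric with $a$ and $d$ playing the role of $b$ and $c$. Finally, item (1) is an immediate byproduct of the case split, since every case yields $|X\cap Y|\in\{k-1,k-2\}$.

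I do not anticipate a real obstacle: the argument is essentially set-theoretic once the vertex $Z$ is introduced. The only subtle point is keeping the symmetric-difference bookkeeping straight in the case $a,b,c,d$ all distinct, and remembering to invoke $d_{F_k(G)}(X,Y)\neq 1$ in the $|X\cap Y|=k-1$ case to rule out the common neighbours being adjacent in $G$.
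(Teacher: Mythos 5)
Your proof is correct and follows essentially the same route as the paper: a length-two path in $F_k(G)$ is exactly two admissible token moves, and the case analysis on which endpoints of the two witness edges coincide yields items (1)--(3). Your set-theoretic bookkeeping through the explicit middle vertex $Z$ is just a cleaner formalization of the paper's token-move argument (the paper handles item (1) by noting that $|X\triangle Y|\geq 6$ would force at least three moves, whereas you get it as a byproduct of the same case split), and you correctly remember to use $d_{F_k(G)}(X,Y)\neq 1$ to upgrade the common neighbour to distance exactly two.
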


\begin{proof} \-\ 
	
	\begin{enumerate}[1{)}]
		\item 
		This is equivalent to showing that $|X\triangle Y|\in \{2,4\}$. Since $X$ and $Y$ are distinct 
		$k$-sets of $V(G)$, $|X\triangle Y|$ must be an even positive integer.
		If $|X\triangle Y|\geq 6$, then we need to carry at least 3 tokens from the vertices in $X\setminus Y$ to the 
		vertices in $Y\setminus X$, and so $d_{F_k(G)}(X,Y)\geq 3$. Hence $|X\triangle Y|\in \{2,4\}$, as required. 
		See Figure~\ref{figs:obs_distancetwo}.
		
		\item 
		Note that $|X\setminus Y|=|Y\setminus X|=2$ in this case. Since $d_{F_k(G)}(X,Y)=2$, there is a way to 
		carry the two tokens at the vertices of $X\setminus Y$ to the vertices of $Y\setminus X$ with exactly two admissible token moves.
		These two token moves corresponds to two independent edges joining  vertices of $X\setminus Y$ with the vertices of $Y\setminus X$.
		See Figure~\ref{figs:obs_distancetwo} ($i$). 
		
		\item 
		In this case $X\setminus Y$  and $Y\setminus X$ each
		consists of exactly one vertex of $G$; say $x$ and $y$, respectively. Since $d_{F_k(G)}(X,Y)=2$, then $x$ cannot be adjacent to $y$ in $G$. 
		On the other hand, $d_{F_k(G)}(X,Y)=2$ implies the existence of an $X-Y$ path $\PP$ produced by exactly 2 admissible token moves.
		Now note that $\PP$ necessarily involves two admissible token moves $x\longrightarrow v$ and $u\longrightarrow y$.
		There are two possibilities either  $x\longrightarrow v$ is  applied before $u\longrightarrow y$
		or $u\longrightarrow y$   is applied
		before $x\longrightarrow v$. 
		Since $\PP$ is produced by exactly 2 admissible token moves, we have that $u=v\in N_G(x)\cap N_G(y)$, and $xvy$ is a 
		path of length two in $G$, as required.
		The two possibilities are depicted in ($ii$) and ($iii$) of Figure~\ref{figs:obs_distancetwo}.
	\end{enumerate}
\end{proof}

\begin{figure}[h]
	\begin{center}
	\includegraphics[width=0.8\textwidth]{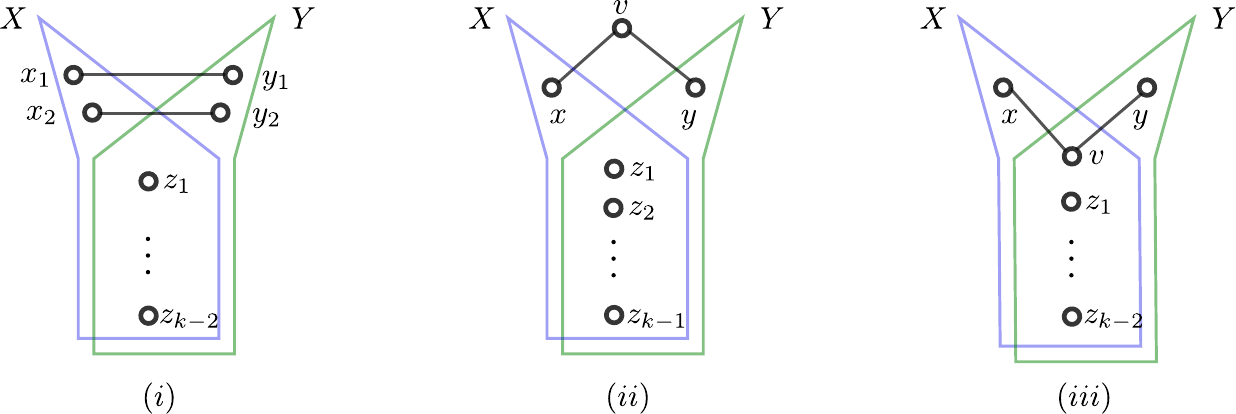}
	\caption{$X$ and $Y$ are vertices of $F_k(G)$ at distance $2$. ($i$) $X\triangle Y=\{x_1,y_1,x_2,y_2\}$
		and $x_1y_1, x_2y_2$ are independent edges of $G$. In ($ii$) and ($iii$) $X\triangle Y=\{x,y\}$ and $xvy$ is a shortest $x-y$ path in $G$. 
		The difference between the last two cases is that in ($ii$) $v\in V(G)\setminus (X\cup Y)$ and  in ($iii$) $v\in X\cap Y$. 
	}
	\label{figs:obs_distancetwo}
\end{center}
\end{figure}

Let $X$ be a vertex of $F_k(G)$. From the definition of $F_k(G)$ it is not hard to see that the \emph{complementary map} 
$\psi(X):=V(G)\setminus X$ defines an isomorphism between $F_k(G)$ and $F_{n-k}(G)$. The next proposition follows from 
the definition of $\psi$. 

\begin{prop}\label{prop:only(ii)}
	Let $\psi: F_k(G) \rightarrow F_{n-k}(G)$ be the complementary isomorphism, and let $X,Y,x,y$ and $v$ be as 
	in the proof of Proposition \ref{prop:cases} (3). Then exactly one of $v\notin X\cup Y $ or $v\notin \psi(X)\cup \psi(Y)$ holds. 
\end{prop}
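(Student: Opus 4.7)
The plan is to reduce the statement to a short set-theoretic bookkeeping argument based on two facts: that $v$ must miss the symmetric difference $X\triangle Y$, and that the complement map $\psi$ satisfies the De Morgan identity $\psi(X)\cup\psi(Y)=V(G)\setminus(X\cap Y)$.

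First I would observe that, by the setup of Proposition \ref{prop:cases} (3), the vertex $v$ satisfies $v\in N_G(x)\cap N_G(y)$. Since $G$ is simple, $v\neq x$ and $v\neq y$, so $v\notin X\triangle Y=\{x,y\}$. Consequently, the partition
\[V(G)=(X\cap Y)\sqcup (X\triangle Y)\sqcup\bigl(V(G)\setminus (X\cup Y)\bigr)\]
forces $v$ to lie in exactly one of the two sets $X\cap Y$ or $V(G)\setminus(X\cup Y)$ — this is the case distinction that gives (ii) versus (iii) in Figure \ref{figs:obs_distancetwo}.

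Next I would translate each of these two alternatives into the language of $\psi$. By definition $\psi(X)=V(G)\setminus X$ and $\psi(Y)=V(G)\setminus Y$, so
\[\psi(X)\cup\psi(Y)=V(G)\setminus(X\cap Y).\]
Therefore $v\notin \psi(X)\cup\psi(Y)$ is equivalent to $v\in X\cap Y$, while $v\notin X\cup Y$ is by definition equivalent to $v\in V(G)\setminus(X\cup Y)$. Since these two membership conditions are mutually exclusive and together exhaust the possibilities established in the previous step, exactly one of the two displayed non-containments holds, which is the claim.

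There is essentially no obstacle: the only point that requires even a moment's attention is confirming $v\neq x,y$, which is immediate from simplicity of $G$. The conceptual content is simply that applying $\psi$ swaps the two geometric configurations (ii) and (iii) of Figure \ref{figs:obs_distancetwo}, so that whichever of the two a given instance falls into, the other is realized after passing to $F_{n-k}(G)$. This dichotomy is what will let later arguments assume the distractor configuration (ii) without loss of generality.
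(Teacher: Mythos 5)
Your proof is correct and follows essentially the same route as the paper's: establish $v\notin\{x,y\}$ (the paper deduces this from $xvy$ being a path of length $2$, you from simplicity of $G$ — the same point), partition $V(G)$ to conclude $v$ lies in exactly one of $X\cap Y$ or $V(G)\setminus(X\cup Y)$, and translate the first alternative via $\psi(X)\cup\psi(Y)=V(G)\setminus(X\cap Y)$. No gaps.
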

\begin{proof}
	From  Proposition \ref{prop:cases} (3) we know that $\{x\}=X\setminus Y$ and  $\{y\}=Y\setminus X$. Since $P=xvy$ 
	is a path of length 2, then we have that $v\notin \{x,y\}$.
	These imply that exactly one of $v\in X\cap Y$ or $v\in V(G)\setminus (X\cup Y)$ holds. Since $v\in X\cap Y$ is equivalent 
	to $v\notin \psi(X)\cup \psi(Y)$, and 
	$v\in V(G)\setminus (X\cup Y)$ is equivalent to $v\notin X\cup Y$, we are done.   
\end{proof}


\section{Proof of Theorem \ref{thm:main}}\label{sec:proof}

Throughout this section, $T$ is a tree of order $n\geq 2$, and $k\in \{1,2,\ldots , n-1\}$. 
It is sufficient to show that 
\begin{displaymath}
	\kappa(F_k(T)) \ge \delta(F_k(T)).
\end{displaymath}
From the definition of $F_1(G)$ it is straightforward to see that $G$ and $F_1(G)$ are isomorphic.
In this case Theorem~\ref{thm:main} holds. We  assume that $n\geq 4$ and  $k\in \{2,\ldots ,n-2 \}$. 
By Proposition \ref{prop:restriction}, it suffices to prove the following.

\begin{lemma}\label{lem:horse}
	Let $X,Y\in V(F_k(T))$ with $d_{F_k(T)}(X,Y)=2$. Then 
	$F_k(T)$ has at least $\delta(F_k(T))$  pairwise internally disjoint $X-Y$ paths. 
\end{lemma}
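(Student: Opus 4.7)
The plan is to invoke Proposition~\ref{prop:cases} and split according to whether $|X\cap Y|=k-2$ or $|X\cap Y|=k-1$, and in each case construct an explicit family of $\delta(F_k(T))$ pairwise internally disjoint $X$-$Y$ paths using the building blocks from Section~\ref{sec:constructions}. Since every internally disjoint $X$-$Y$ path must occupy a distinct edge in both $E_T(X,V(T)\setminus X)$ and $E_T(Y,V(T)\setminus Y)$, it is natural to index the paths by the edges of $T$ that contribute to $\deg_{F_k(T)}(X)$ and $\deg_{F_k(T)}(Y)$.

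In the case $|X\cap Y|=k-2$, the symmetric difference gives independent edges $x_1y_1$ and $x_2y_2$ of $T$. I would begin with the two length-two paths $\PP_1\colon x_1\longrightarrow y_1;\ x_2\longrightarrow y_2$ and $\PP_2\colon x_2\longrightarrow y_2;\ x_1\longrightarrow y_1$, which are automatically internally disjoint. Then, for each edge $uw\in E(T)$ with $u\in X\cap Y$ and $w\notin X\cup Y$, apply the distractor trick to $\PP_1$ to produce a new $X$-$Y$ path whose interior configurations all contain $w$ and miss $u$, forcing internal disjointness from $\PP_1$, $\PP_2$, and from the analogous paths obtained with different edges. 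Additional paths must be exhibited to account for any "crossing" edge $x_1y_2$ or $x_2y_1$ (at most one of which can appear, since $T$ is acyclic) and for each edge joining some $x_i$ or $y_j$ to a vertex of $V(T)\setminus(X\cup Y)$; each such edge suggests a tailored schedule, obtained by first displacing the relevant token through that edge before (or after) performing the basic swap.

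In the case $|X\cap Y|=k-1$, write $\{x\}=X\setminus Y$, $\{y\}=Y\setminus X$, and let $xvy$ be the length-two path of $T$ guaranteed by Proposition~\ref{prop:cases}. Using Proposition~\ref{prop:only(ii)} and the complementary isomorphism $F_k(T)\cong F_{n-k}(T)$, I may assume $v\notin X\cup Y$. The main path $\PP$ is the Type-1 path induced by $xvy$, i.e.\ $x\longrightarrow v\longrightarrow y$. For every edge $uw\in E(T)$ with $u\in X\cap Y$ and $w\in V(T)\setminus(X\cup Y\cup\{v\})$, the distractor trick on $\PP$ yields an internally disjoint path. The edges that do not fit this template, namely those from $X\cap Y$ to $v$, from $X\cap Y$ to $y$, and those from $x$ (or $y$) to a further vertex of $V(T)\setminus(X\cup Y)$, require custom constructions that push one token along a longer excursion in $T$ (for example, moving $x$ first to an outside neighbour, then rerouting through another branch of $T$ to complete the swap); pairwise disjointness is again certified by following which vertex of $V(T)\setminus(X\cap Y)$ holds the "excess" token at each intermediate configuration.

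The main obstacle will be the bookkeeping. Each category of edge in $E_T(X,V(T)\setminus X)\cup E_T(Y,V(T)\setminus Y)$ must be matched to a path, and the constructions for edges incident to the boundary vertices $x,y,x_1,x_2,y_1,y_2$ are genuinely ad hoc rather than uniform applications of the distractor trick; ensuring that these tailored paths stay internally disjoint from all the distractor paths (and from each other) is where the casework will concentrate. Finally, tallying the constructed paths and comparing them against $\deg_{F_k(T)}(X)$ and $\deg_{F_k(T)}(Y)$ should give the required lower bound of $\delta(F_k(T))$, since any minimum vertex cut already satisfies $\delta(F_k(T))\le\min(\deg(X),\deg(Y))$.
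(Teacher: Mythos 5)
Your overall architecture matches the paper's: split by $|X\cap Y|\in\{k-1,k-2\}$ via Proposition~\ref{prop:cases}, reduce to $v\notin X\cup Y$ by the complementary isomorphism, take the obvious short swap path(s), and add one internally disjoint path per edge of $E_{Z,W}$ by the distractor trick. That much is sound and is exactly the paper's $\mathbb{T}_1\cup\mathbb{T}_2$ (resp.\ $\ML_1\cup\ML_2$).

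The genuine gap is in your final tally. You propose to index paths by the edges contributing to $\deg(X)$ and $\deg(Y)$ and to conclude because $\delta(F_k(T))\le\min(\deg(X),\deg(Y))$. But the ``tailored'' paths for edges incident to $x$ and $y$ cannot be produced one per edge: the excursion that uses an edge $xw$ with $w\in W(x)$ must be paired with an edge $zx$ with $z\in Z(x)$ (the token parked at $w$ has to be compensated by pulling a token of $Z(x)$ through $x$, and symmetrically at $y$), so in the case $|Z|=k-1$ these constructions yield only $\min\{a,c\}+\min\{b,d\}$ paths, where $a=|W(x)|$, $c=|Z(x)|$, $b=|Z(y)|$, $d=|W(y)|$. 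Since $\min\{a,c\}+\min\{b,d\}$ can be strictly smaller than both $a+b$ and $c+d$, the total $m=\min\{a,c\}+\min\{b,d\}+\eta+1$ can fall short of $\min(\deg(X),\deg(Y))$ by an arbitrarily large amount (take $a=d=0$ and $b=c$ large), and in that regime one cannot in general exhibit $\min(\deg(X),\deg(Y))$ internally disjoint $X$--$Y$ paths at all. The missing idea is the paper's Claims~\ref{claim:min-deg} and~\ref{cl:min_deg_case2}: when the construction falls short, one must bound $\delta$ itself by exhibiting a \emph{third} vertex of $F_k(T)$ of small degree --- obtained by moving a token of $X$ to a leaf $u$ of an induced forest such as $T[W\cup\{x,y\}]$, which gives $\deg(X_1)\le m+2$ --- and only then construct the at most two remaining paths by hand. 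Without this step (which is where the tree hypothesis does its real work), comparing against $\deg(X)$ and $\deg(Y)$ does not close the argument.
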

\begin{proof}
	For brevity of notation, let $XY:=X\cap Y$, $\overline{XY}:=V(T)\setminus(X\cup Y),$ and $\delta:=\delta(F_k(T))$. 
	We remark that here $XY$ and $\overline{XY}$ are subsets of $V(T)$, but not edges of $F_k(T)$ or $F_{n-k}(T)$. 
	
	Informally, the general strategy to show Lemma~\ref{lem:horse} is as follows.  
	\begin{itemize} 
		\item \textsc{Step 1}. First, we construct a certain number $m$ of pairwise internally disjoint $X-Y$ paths in $F_k(T)$.
		\item \textsc{Step 2}. If $\delta > m$, we construct the $\delta-m$ missing $X-Y$ paths. 
	\end{itemize}
	
	The hypothesis $d(X,Y)=2$ and Proposition~\ref{prop:cases} (1) imply that $|XY|=k-1$ or $|XY|=k-2$. 
	We analyze these cases separately.

	\subsection{\textsc{Case 1:} $|XY|=k-1$} 
	
	From Proposition~\ref{prop:cases} (3) we know that there exist $x,y,v\in V(T)$ such that
	$\{x\}=X\setminus Y, \{y\}=Y\setminus X, v\notin \{x,y\}$, and $P=xvy$ is a shortest $x-y$ path of $T$. 
	In view of Proposition~\ref{prop:only(ii)}, we can assume without any loss of generality that $v\notin X\cup Y$. 
	Indeed, if  $v\in X\cup Y$ then by Proposition~\ref{prop:only(ii)}
	$v\notin \psi(X)\cup \psi(Y)$. Since $F_k(T)$ and $F_{n-k}(T)$ are isomorphic under $\psi(U)=V(T)\setminus U$,
	then we can work with $\psi(X)$ and $\psi(Y)$ in $F_{n-k}(T)$ instead of $X$ and $Y$ in $F_{k}(T)$. 
	We assume that $X$ and $Y$ are as in Figure~\ref{figs:obs_distancetwo}~($ii$). 
	Let $\overline{XY}':=\overline{XY}\setminus \{v\}$ and let
	
	\begin{align*}
		\overline{XY}(x)&:=\{w\in \overline{XY}':\text{$w$ is adjacent to $x$}\}=\{w_x^1,\ldots, w_x^a\}, \\
		\overline{XY}(y)&:=\{w\in \overline{XY}':\text{$w$ is adjacent to $y$}\}=\{w_y^1,\ldots,w_y^d\}, \\
		XY(x)&:=\{z\in XY:\text{$z$ is adjacent to $x$}\}=\{z_x^1,\ldots,z_x^c\},  \\
		XY(y)&:=\{z\in XY:\text{$z$ is adjacent to $y$}\}=\{z_y^1,\ldots,z_y^b\},
	\end{align*}
	where $a:=|\overline{XY}(x)|$, $b:=|XY(y)|$, $c:=|XY(x)|$, and $d:=|\overline{XY}(y)|$. See Figure~\ref{fig:case1}. 
	
	\begin{figure}[h]
		\begin{center}
		\includegraphics[width=0.4\textwidth]{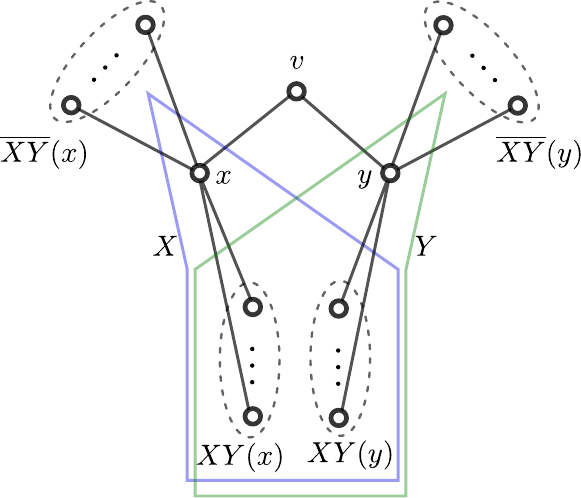}
		\caption{The neighbors of $x$ and $y$ in \textsc{Case 1}.}
		\label{fig:case1}
		\end{center} 
	\end{figure}

	Let us define 
	\begin{displaymath}
		E_{XY,\overline{XY}}:=\{zw\in E(T) : z\in XY \text{ and }w\in \overline{XY}\},  \text{ and } \eta:=|E_{XY,\overline{XY}}|.
	\end{displaymath}
	Since $T$ is a tree, then $\overline{XY}(x), \overline{XY}(y), XY(x),$ and $XY(y)$ are pairwise disjoint. Then, 
	in $F_k(T)$, $deg(X)=a+b+\eta+1$ and $deg(Y)=c+d+\eta+1.$ Without loss of generality we may assume that $deg(X)\leq deg(Y)$.
	Hence, $a+b\leq c+d$. 
	
	Let $m_x:=\min\{a,c\}$, $m_y:=\min\{b,d\}$, and $m:=m_x+m_y+\eta+1.$
	
	
	\subsubsection{\textsc{Step 1} of \textsc{Case 1}}

	We produce the required $m$ $X-Y$ paths by means of four types of constructions. 
	
	\begin{enumerate}
		\item Using the vertex $v$: 
		\begin{displaymath}
			\PP_0:=x\longrightarrow v\longrightarrow y.
		\end{displaymath}
		Let $\mathbb{T}_1:=\{\PP_0\}$. Note that if $A_0$ is the (unique) inner vertex of $\PP_0$, then
		
		\begin{property}
			\item[\enspace(C1)]  $A_0\cap XY=XY$ and $A_0\cap \overline{XY}'=\emptyset$.
		\end{property}
		
		\item Using the edges of $E_{XY,\overline{XY}}$. For each $z_iw_j\in E_{XY,\overline{XY}}$, 
		let $\PP_{i,j}$ be the $X-Y$ path defined as follows: 
		\begin{displaymath}
		\PP_{i,j}:=\begin{cases}
			z_i\rightarrow w_j; x\rightarrow v\rightarrow y; w_j\rightarrow z_i & \text{if $w_j\neq v$;} \\
			z_i\rightarrow v\rightarrow y; x\rightarrow v\rightarrow z_i & \text{if $w_j=v$.}
		\end{cases}
		\end{displaymath}
		Let $\mathbb{T}_2:=\{\PP_{i,j}:z_iw_j\in E_{XY,\overline{XY}}\}$. 
		Note that if $A_{i,j}$ is an inner vertex of $\PP_{i,j}$, then  
		
\begin{property}
	\item[\enspace(C2)] $A_{i,j}\cap XY=XY\setminus \{z_i\}$. 
\end{property}
		Moreover, depending on whether $w_j\neq v$ or $w_j=v$, then $A_{i,j}$ also satisfies the following:
		
		\begin{property}
			\item[\enspace(C2.1)] If $w_j\neq v$, then  $A_{i,j}\cap \overline{XY}'=\{w_j\}$. 
			\item[\enspace(C2.2)] If $w_j=v$, then $A_{i,j}\cap \overline{XY}'=\emptyset$. 
		\end{property}

		We recall that if $r=0$, then $[r]=\emptyset$. 
		
		\item Using the vertices $w_x^i\in \overline{XY}(x)$ and $z_x^i\in XY(x)$. For each $i \in [m_x]$, we define the path $\PP_i$ as follows: 
		\begin{displaymath}
			\PP_i:=x\rightarrow w_x^i; z_x^i\rightarrow x \rightarrow v \rightarrow y;w_x^i\rightarrow x\rightarrow z_x^i. 
		\end{displaymath}
		Let $\mathbb{T}_3:=\{\PP_i:i\in [m_x]\}$. Again, note that if $A_i$ is an inner vertex of $\PP_i$, then

		\begin{property}
			\item[\enspace(C3)] Either $A_i\cap XY=XY$ or $A_i\cap XY=XY\setminus \{z_{x}^i\}$, and either $A_i\cap \overline{XY}'=\emptyset$
			or $A_i\cap \overline{XY}'=\{w_{x}^i\}$, and at least one of the following holds: $A_i\cap XY=XY\setminus \{z_x^i\}$ 
			or $A_i\cap \overline{XY}'=\{w_{x}^i\}$. 
		\end{property}
		
		\item Using the vertices $w_y^j\in \overline{XY}(y)$ and $z_y^j\in XY(y)$. For each $j \in [m_y]$, we define the path $\QQ_j$ as follows:
		\begin{displaymath}
			\QQ_j:= z_y^j\rightarrow y \rightarrow w_y^j; x\rightarrow v\rightarrow y\rightarrow z_y^j;w_y^j\rightarrow y. 
		\end{displaymath}
		Let $\mathbb{T}_4:=\{\QQ_j:j\in [m_y]\}$. Again, note that if $A_j$ is an inner vertex of $\QQ_j$, then
		
		\begin{property}
			\item[\enspace(C4)] Either $A_j\cap XY=XY$ or $A_j\cap XY=XY\setminus \{z_{y}^j\}$, and either $A_j\cap \overline{XY}'=\emptyset$
			or $A_j\cap \overline{XY}'=\{w_{y}^j\}$, and at least one of the following holds: $A_j\cap XY=XY\setminus \{z_y^j\}$
			or $A_j\cap \overline{XY}'=\{w_{y}^j\}$. 
		\end{property}
	\end{enumerate}
	
	Let us define  $\mathbb{T}:=\mathbb{T}_1\cup \mathbb{T}_2\cup \mathbb{T}_3\cup \mathbb{T}_4.$ 
	Since $|\mathbb{T}_1|=1, |\mathbb{T}_2|=\eta, |\mathbb{T}_3|=m_x, |\mathbb{T}_4|=m_y$, and 
	$m=1+\eta+m_x+m_y$, then in order to finish the \textsc{Step~1} of \textsc{Case~1}, it is enough to 
	show that the paths in $\mathbb{T}$ are pairwise internally disjoint. 
	\vskip 0.2cm
	\begin{claim}\label{cl:first_constructions}
		The $X-Y$ paths in $\mathbb{T}$ are pairwise internally disjoint. 
	\end{claim}
\begin{prf}{\ref{cl:first_constructions}}First we show separately that the paths in $\mathbb{T}_{\ell}$ 
	are pairwise internally disjoint for $\ell\in \{2,3,4\}$. 
	
	Suppose that $\boldsymbol{\ell=2}$, and let $\PP_{i,j}$ and $\PP_{s,t}$ be distinct paths in $\mathbb{T}_2$. 
	Let $A_{i,j}$ and $A_{s,t}$ be inner vertices of $\PP_{i,j}$ and $\PP_{s,t}$, respectively. 
	Since $(i,j)\neq (s,t)$, then $z_i\neq z_s$ or $w_j\neq w_t$. 
	
	If $z_i\neq z_s$, then from (C2) we know that $A_{i,j}\cap XY=XY\setminus\{z_i\}$ and $A_{s,t}\cap XY=XY\setminus\{z_s\}$. Hence 
	$z_i\in A_{s,t}\setminus A_{i,j}$, which implies that $A_{i,j}\neq A_{s,t}$. 
	
	Now suppose that $w_j\neq w_t$. First suppose that $v\notin \{w_j, w_t\}$. By (C2.1) we have $A_{i,j}\cap \overline{XY}'=\{w_j\}$, 
	and similarly, $A_{s,t}\cap \overline{XY}'=\{w_t\}$. Then, $A_{i,j}\cap \overline{XY}'\neq A_{s,t}\cap \overline{XY}'$ 
	and so $A_{i,j}\neq A_{s,t}$. Then	we may assume that $v\in \{w_j, w_t\}$. Without loss of generality suppose that 
	$w_j=v$. We know by (C2.2) that	$A_{i,j}\cap \overline{XY}'=\emptyset$, and by (C2.1) that $A_{s,t}\cap \overline{XY}'=\{w_t\}$, 
	these two facts imply that  $A_{i,j}\neq A_{s,t}$. 
	
	Suppose that $\boldsymbol{\ell=3}$, and let $\PP_s$ and $\PP_t$  be distinct paths in $\mathbb{T}_3$. 
	For $r\in \{s,t\}$, let $A_r$ be an inner vertex of $\PP_r$. From the last assertion of (C3) we know that 
	$A_s\cap \overline{XY}'=\{w_x^s\}$ or $A_s\cap XY=XY\setminus \{z_x^s\}$. Suppose that $A_s\cap \overline{XY}'=\{w_{x}^s\}$. 
	Since (C3) implies that $A_t\cap \overline{XY}'=\emptyset$ or $A_t\cap \overline{XY}'=\{w_x^t\}$, then we have 
	$A_s\cap \overline{XY}'\neq A_t\cap \overline{XY}'$, and so $A_s\neq A_t$.  
	Now suppose that $A_s\cap XY=XY\setminus \{z_x^s\}$. Again, from (C3) we know that $A_t\cap XY=XY$ or 
	$A_t\cap XY=XY\setminus \{z_x^t\}$. Since $z_{x}^s\neq z_x^t$, then  $A_s\cap XY\neq A_t\cap XY$, and so $A_s\neq A_t$.

	Suppose that $\boldsymbol{\ell=4}$. This case can be handled in a totally analogous manner as previous case.	
	
	Let $A_0, A_{i,j}, A_s$, and $A_t$ be inner vertices of $\PP_0\in \mathbb{T}_1$, $\PP_{i,j}\in \mathbb{T}_2$, 
	$\PP_s\in \mathbb{T}_3$, and $\QQ_t\in \mathbb{T}_4$, respectively. It remains to show that $\PP_0,\PP_{i,j},\PP_s$, 
	and $\QQ_t$ are pairwise internally disjoint. We analyze separately each pair.
	
	\begin{mylist}
		\item[{${\{A_0, A_{i,j}\}}$:}] Here we have $A_0\cap XY=XY$, while
		$A_{i,j}\cap XY=XY\setminus \{z_i\}$, and so $A_0\neq A_{i,j}$. 
		
		\item[{${\{A_0, A_s\}}$:}] By (C1) we know that $A_0\cap XY=XY$ and that $A_0\cap \overline{XY}'=\emptyset$. 
		Similarly, by the last assertion of (C3), we know that either 
		$A_s\cap XY= XY\setminus\{z_x^s\}$ or $A_s\cap \overline{XY}'=\{w_x^s\}$, then we have $A_0\neq A_s$. 
		
		\item[{${\{A_0, A_t\}}$:}] As in previous case, the last assertion of (C4) implies that either 
		$A_t\cap XY= XY\setminus \{z_y^t\}$ or $A_t\cap \overline{XY}'= \{w_y^t\}$. Then, since   
		$A_0\cap XY=XY$ and $A_0\cap \overline{XY}'=\emptyset$, we have $A_0\neq A_t$.
		
		\item[{${\{A_{i,j}, A_s\}}$:}] First suppose that $w_j=v$. Then
		$z_i\neq z_x^s$, as otherwise the vertex set $\{x,z_i,v\}$ forms a cycle, contradicting
		that $T$ is a tree. Since $A_{i,j}\cap XY=XY\setminus \{z_i\}$, and either $A_s\cap XY=XY$ or $A_s\cap XY=XY\setminus \{z_x^s\}$,
		then $A_{i,j}\cap XY\neq A_s\cap XY$, as required.
		
		Suppose now that $w_j\neq v$. By (C3) we know that $A_s\cap XY=XY$ or $A_s\cap XY=XY\setminus \{z_x^s\}$. 
		If $A_s\cap XY=XY$, then $A_{i,j}\cap XY=XY\setminus \{z_i\}$ implies that  $A_s\neq A_{i,j}$. 
		Thus we may assume that $A_s\cap XY=XY\setminus \{z_{x}^s\}$. 
		If  $z_i\neq z_x^s$, then $XY\setminus \{z_{x}^s\}=A_s\cap XY\neq A_{i,j}\cap XY=XY\setminus \{z_i\}$, as desired.  
		Then we can assume that $z_x^s=z_i$. This implies that $w_x^s\neq w_j$, as otherwise $\{z_i,x,w_j\}$ forms a cycle. 
		By (C2.1) we know that $A_{i,j}\cap \overline{XY}'=\{w_j\}$, and by (C3) we have that either $A_s\cap \overline{XY}'=\emptyset$ or 
		$A_s\cap \overline{XY}'=\{w_x^s\}$. Since $w_x^s\neq w_j$, then $A_{i,j}\cap \overline{XY}'\neq A_s\cap \overline{XY}'$, as required.  
		
		\item[{${\{A_{i,j}, A_t\}}$:}] Again, this case can be handled in a totally analogous manner as previous case.	
		
		\item[{${\{A_s, A_t\}}$:}]  Since $XY(x), XY(y), \overline{XY}(x)$, and $\overline{XY}(y)$ are pairwise disjoint, then 
		$z_x^s\neq z_y^t$ and $w_x^s\neq w_y^t$. From these inequalities and (C3)-(C4) we have that either $A_s\cap XY\neq A_t\cap XY$
		or $A_s\cap \overline{XY}'\neq A_t\cap \overline{XY}'$, and so $A_s \neq A_t$. 	
	\end{mylist}
	This completes the proof of Claim \ref{cl:first_constructions}. \end{prf}

	
	\subsubsection{\textsc{Step 2} of \textsc{Case 1}} 
	
	We start by showing that $\delta - m \leq 2$. 
	
	\vskip 0.2cm
	
	\begin{claim}\label{claim:min-deg}
		Let $\delta, m, m_x, m_y,$ and $\eta$ be as above. Then,	
		\begin{displaymath}
		\delta \leq \begin{cases}
			m_x+m_y+\eta +1=m & \text{if $a\leq c$ and $b\leq d$, or $a>c$,}\\
			m_x+m_y+\eta +2=m+1 & \text{if $b=d+1$,} \\
			m_x+m_y+\eta+3=m+2 & \text{if $b\geq d+2$.}
		\end{cases}
		\end{displaymath}
	\end{claim}
\begin{prf}{\ref{claim:min-deg}}
First we note that  if $a\leq c$ and $b\leq d$, then
	\begin{displaymath}
		\delta \leq deg(X)=a+b+\eta +1=m_x+m_y+\eta+1=m,
	\end{displaymath}
	as claimed. 
	
	Suppose that $a>c$. Since $a+b\leq c+d$, then $b<d$. 
	Let $U:=\overline{XY}\cup \{x,y\}$. Since $T[U]$ is a forest, then it contains at least a vertex $u\in U\setminus \{v\}$
	such that $deg_{T[U]}(u)\leq 1$.  Note that $u\notin \{x,y\}$, because $deg_{T[U]}(x)=a+1\geq 2$ and 
	$deg_{T[U]}(y)=d+1\geq 2$. Let $X':=(X\setminus\{x\})\cup \{u\}$, so 
	\begin{displaymath}
		\delta\leq deg(X')\leq b+c+\eta+deg_{T[U]}(u)\leq m_x+m_y+\eta+1=m,
	\end{displaymath}
	as claimed. 
	
	Suppose that $b=d+1$. Since $a+b\leq c+d$, then $a<c$. In this case we have that 
	\begin{displaymath}
		\delta\leq deg(X)=a+b+\eta+1=a+(d+1)+\eta+1=m_x+m_y+\eta+2=m+1.
	\end{displaymath}
	
	Finally, suppose that $b\geq d+2$. Since $a+b\leq c+d$, then $c\geq a+2$. Let $U:=X\cup Y$. Since $T[U]$ is
	a forest,  then it contains at least a vertex $u\in U$ such that $deg_{T[U]}(u)\leq 1$. 
	Note that $u\notin\{x,y\}$, because $deg_{T[U]}(x)\geq c\geq 2$ and $deg_{T[U]}(y)\geq b \geq 2$.  
	Let $X'=(X\setminus\{u\})\cup \{y\}$, then 
	\begin{displaymath}
		\delta\leq deg(X')\leq (a+1)+(d+1)+\eta+deg_{T[U]}(u)\leq m_x+m_y+\eta+3=m+2.
	\end{displaymath}
	This completes the proof of Claim~\ref{claim:min-deg}. 
	\end{prf}
	
	Claim~\ref{claim:min-deg} shows that almost all $X-Y$ paths claimed by Lemma~\ref{lem:horse} are provided by $\mathbb{T}$, when $|XY|=k-1$.
	We finish the proof of \textsc{Case 1} with the construction of the remaining $\delta-m$ $X-Y$ paths. 
	\vskip 0.2cm
	\begin{claim}\label{claim:end-case1}
		If  $|XY|=k-1$, then $F_k(T)$ has at least $\delta$  $X-Y$ pairwise internally disjoint paths.
	\end{claim}
\begin{prf}{\ref{claim:end-case1}}
We have already constructed $m$ $X-Y$ pairwise internally disjoint paths, namely the elements of
	$\mathbb{T}$. Then, it remains to show the existence of $\delta-m$ additional $X-Y$ paths with similar properties.  
	Since if $\delta \leq m$ then there is nothing to prove,
	we assume that $\delta > m$. From this and Claim~\ref{claim:min-deg} it follows that $b\geq d+1$. Moreover, since $a+b\leq c+d$, then $c\geq a+1$.
	Hence, $a=\min\{a,c\}$ and $d=\min\{b,d\}$.
	
	Suppose first that $b=d+1$. By Claim~\ref{claim:min-deg} we have that $\delta \leq m+1$. Thus, it is enough to construct a new $X-Y$ 
	path internally disjoint to each path in $\mathbb{T}$. Since $b=d+1>d=\min\{b,d\}$ and $c\geq a+1>a=\min\{a,c\}$, then the vertices 
	$z_y^{b}$ and $z_x^{c}$ were not used in the construction of the paths of $\mathbb{T}_3\cup \mathbb{T}_4$. We construct the required 
	path $\PP$ as follows: 
	\begin{displaymath}
		\PP:=z_y^{b}\longrightarrow y; x\longrightarrow v; z_x^{c}\longrightarrow x; y\longrightarrow z_y^{b}; v\longrightarrow y; x\longrightarrow z_x^{c}.
	\end{displaymath}
	Let $A$ be an inner vertex of $\PP$. From the definition of $\PP$ it follows that
	
\begin{property}
	\item[\enspace(C5)] Either $A\cap XY=XY\setminus \{z_y^b\}$ or $A\cap XY=XY\setminus \{z_x^c\}$ or  $A\cap XY=XY\setminus \{z_x^b, z_y^c\}$, and that $A\cap \overline{XY}'=\emptyset$.
\end{property}

	Now we show that $\PP$ is internally disjoint to any path in $\mathbb{T}$. Let $A_0, A_{i,j}, A_s$, and $A_t$ be inner vertices of 
	$\PP_0\in \mathbb{T}_1$, $\PP_{i,j}\in \mathbb{T}_2$, $\PP_s\in \mathbb{T}_3$, and $\QQ_t\in \mathbb{T}_4$, respectively.		
	
	We analyze these cases separately. 		 
	\begin{mylist}
		\item[{${\{A_0, A\}}$:}]  
		By (C1) and (C5) we know that $A_0\cap XY=XY$ and $A\cap XY\neq XY$, respectively, and so $A\neq A_0$.  
		\item[{${\{A_{i,j}, A\}}$:}] 
		If $w_j\neq v$, then $A_{i,j}\cap \overline{XY}'=\{w_j\}$, and then $A\cap \overline{XY}'\neq A_{i,j}\cap \overline{XY}'$, 
		which implies that $A\neq A_{i,j}$. \\
		Now suppose that $w_j=v$. Then $z_i\notin \{z_y^{b}, z_x^{c}\}$, as otherwise $T$ has a cycle. 
		Then, by (C2) and (C5) we have that $A_{i,j}\cap XY\neq A\cap XY$,
		and so $A\neq A_{i,j}$. 
		
		\item[{${\{A_s, A\}}$:}] 
		Note that $z_x^s\neq z_x^{c}$, because $s\leq a<c$. Similarly, $z_x^s \neq z_y^{b}$, because 
		$XY(x)\cap XY(y)=\emptyset$. Then, (C3) and (C5) implies that $A_s\cap XY\neq A\cap XY$, and so $A\neq A_{s}$. 
		
		\item[{${\{A_t, A\}}$:}] 
		We proceed as in previous case. Since $t\leq d<b$, then $z_y^t\neq z_y^b$, and
		$z_y^t\neq z_x^{c}$ because $XY(x)\cap XY(y)=\emptyset$. 
		Then, (C4) and (C5) implies that $A_t\cap XY\neq A\cap XY$, and so $A\neq A_{t}$.   
	\end{mylist}

	Finally, suppose that $b\geq d+2$. By Claim~\ref{claim:min-deg} we have that $\delta \leq m+2$. Thus, it is 
	enough to construct two $X-Y$ 	paths, say  $\PP$ and $\PP'$, such that $\{\PP,\PP'\}\cup \mathbb{T}$ is a set 
	of pairwise internally disjoint paths. 
	
	Since $b\geq d+2$ and $a+b\leq c+d$, then $c\geq a+2$. Now we use $z_y^{b},z_y^{b-1},z_x^{c}$, and $z_x^{c-1}$ to construct
	$\PP$ and $\PP'$ as follows.
	\begin{displaymath}
		\PP:=z_y^{b}\rightarrow y; x\rightarrow v; z_x^{c}\rightarrow x; y\rightarrow z_y^{b}; v\rightarrow y; x\rightarrow z_x^{c}, \text{ and }
	\end{displaymath}
	\begin{displaymath}
		\PP':=z_y^{b-1}\rightarrow y; x\rightarrow v; z_x^{c-1}\rightarrow x; y\rightarrow z_y^{b-1}; v\rightarrow y;x\rightarrow z_x^{c-1}.
	\end{displaymath}
	
	Note that a similar argument to the one used above (for the case $b=d+1$) can be applied to show that  
	$\PP$ and $\PP'$ are internally disjoint of  
	each path in $\mathbb{T}$. Hence all that remains to be checked is that $\PP$ and $\PP'$ are  internally disjoint. 
	
	Let $A$ and $A'$ be inner vertices of $\PP$ and $\PP'$, respectively. From the definition of $\PP$ 
	(respectively, $\PP'$) we know that either 	$A\cap XY=XY\setminus\{z_y^{b}\}$, $A\cap XY=XY\setminus\{z_x^{c}\}$, 
	or $A\cap XY=XY\setminus \{z_y^{b},z_x^{c}\}$ (respectively, $A'\cap XY=XY\setminus \{z_y^{b-1}\}$, 
	$A'\cap XY=XY\setminus\{z_x^{c-1}\}$, or $A'\cap XY=XY\setminus\{z_y^{b-1},z_x^{c-1}\}$). Since 
	$\{z_y^{b},z_x^{c}\}\cap \{z_y^{b-1},z_x^{c-1}\}=\emptyset$, then in all the arising cases, we always have $A\neq A'$, as required. 
	This completes the proof of Claim~\ref{claim:end-case1}, and hence the proof of \textsc{Case 1}. \end{prf}


	\subsection{\textsc{Case 2:} $|XY|=k-2$} 
	
	From Proposition~\ref{prop:cases} (2) we know that $T$ has two independent edges 
	$x_1y_1$ and $x_2y_2$ such that $X\setminus Y=\{x_1,x_2\}$ and $Y\setminus X=\{y_1,y_2\}$. 
	Then, we can assume that $X$ and $Y$ are as in Figure~\ref{figs:obs_distancetwo}~($i$).  
	Similarly as in \textsc{Case 1}, for $i\in \{1,2\}$, let us define
	
	\begin{align*}
		\overline{XY}(x_i)&:=\{w\in \overline{XY}:w\text{ is adjacent to } x_i\}=\{w_{x_i}^1,\ldots, w_{x_i}^{a_i}\}, \\
		\overline{XY}(y_i)&:=\{w\in \overline{XY}:w\text{ is adjacent to } y_i\}=\{w_{y_i}^1,\ldots, w_{y_i}^{d_i}\}, \\ 
		XY(x_i)&:=\{z\in XY:z\text{ is adjacent to } x_i\}=\{z_{x_i}^1,\ldots, z_{x_i}^{c_i}\}, \\
		XY(y_i)&:=\{z\in XY:z\text{ is adjacent to } y_i\}=\{z_{y_i}^1,\ldots, z_{y_i}^{b_i}\}, 
	\end{align*}
	where $a_i:=|\overline{XY}(x_i)|$, $b_i:=|XY(y_i)|$, $c_i:=|XY(x_i)|$, and $d_i:=|\overline{XY}(y_i)|$.  
	
	The next observation follows easily from the involved definitions and the fact that $T$ is a tree. 
	
	\begin{obs}\label{obs:intersections} Let $i\in \{1,2\}$. Then $\overline{XY}(x_i)\cap \overline{XY}(y_i)=\emptyset$ 
		and  $XY(x_i)\cap XY(y_i)=\emptyset$, and  at most one of the following occurs: 
		$|\overline{XY}(x_1)\cap \overline{XY}(x_2)|=1$, $|\overline{XY}(y_1)\cap \overline{XY}(y_2)|=1$, 
		$|XY(x_1)\cap XY(x_2)|=1$, or $|XY(y_1)\cap XY(y_2)|=1$. 
	\end{obs}
	
	Let us define
	\begin{displaymath}
		E_{XY,\overline{XY}}:=\{z_iw_j\in E(G):z_i\in XY \text{ and } w_j\in \overline{XY}\}, \text{ and let }\eta:=|E_{XY,\overline{XY}}|.
	\end{displaymath}
	Then
\begin{displaymath}
	deg(X)=
	\begin{cases}
		a_1+a_2+b_1+b_2+\eta+2 & \text{ if $x_1y_2\notin E(T)$ and $x_2y_1\notin E(T)$,}\\
		a_1+a_2+b_1+b_2+\eta+3 & \text{ otherwise. } 
	\end{cases}
\end{displaymath}
	and, 
\begin{displaymath}
	deg(Y)=
	\begin{cases}
		c_1+c_2+d_1+d_2+\eta+2 & \text{ if $x_1y_2\notin E(T)$ and $x_2y_1\notin E(T)$, }\\
		c_1+c_2+d_1+d_2+\eta+3 & \text{ otherwise. } 
	\end{cases}
\end{displaymath}
	
	Note that the term ``+3" in $deg(X)$ and $deg(Y)$ means that $T$ has 3 edges with an end in $\{x_1,x_2\}$ 
	and the other end in $\{y_1,y_2\}$. 
	Then it is impossible to have $deg(X)=a_1+a_2+b_1+b_2+\eta+2$ 
	and $deg(Y)=c_1+c_2+d_1+d_2+\eta+3$ simultaneously. Similarly,  $deg(X)=a_1+a_2+b_1+b_2+\eta+3$ 
	and $deg(Y)=c_1+c_2+d_1+d_2+\eta+2$ cannot occur simultaneously. 
	
	Without loss of generality we assume that  $deg(X)\leq deg(Y)$.  This assumption together with the assertions 
	of the previous paragraph imply that $a_1+a_2+b_1+b_2\leq c_1+c_2+d_1+d_2$. For $i\in \{1,2\}$, let 
	$m_{x_i}:=\min\{a_i,c_i\}, m_{y_i}:=\min\{b_i,d_i\},$ and $m:=m_{x_1}+m_{x_2}+m_{y_1}+m_{y_2}+\eta+2.$
	
	
	\subsubsection{\textsc{Step 1} of \textsc{Case 2}}
	
	We proceed similarly as in \textsc{Case 1}. In particular, we often use slight adaptation of many arguments given in \textsc{Case 1}.
	We start by producing $m$ $X-Y$ paths by means of six types of constructions. 
	
	\begin{enumerate}
		\item Let us define $\PP_{x_1}$ and $\PP_{x_2}$ as follows:
		\begin{displaymath}
			\PP_{x_1}:=x_1\rightarrow y_1; x_2\rightarrow y_2
		\end{displaymath}
		\begin{displaymath}
			\PP_{x_2}:=x_2\rightarrow y_2; x_1\rightarrow y_1.
		\end{displaymath}
		Let $\ML_1:=\{\PP_{x_1},\PP_{x_2}\}$. Let $\PP\in \ML_1$, and let $A$ be an inner vertex of $\PP$. Then
		
		\begin{property}
			\item[\enspace(D1)] $A\cap XY=XY$ and $A\cap \overline{XY}=\emptyset$. 
		\end{property}
		
		\item For each edge $z_iw_j\in E_{XY,\overline{XY}}$, let
		\begin{displaymath}
			\PP_{i,j}:= z_i\rightarrow w_j; x_1\rightarrow y_1; x_2\rightarrow y_2; w_j\rightarrow z_i.
		\end{displaymath}
		Let $\ML_2:=\{\PP_{i,j}:z_iw_j\in E_{XY,\overline{XY}}\}.$ Let $\PP_{i,j}\in \ML_2$, and let $A_{i,j}$ be an inner 
		vertex of $\PP_{i,j}$. Then 
		\begin{property}
			\item[\enspace(D2)] $A_{i,j}\cap XY=XY\setminus \{z_i\}$ and $A_{i,j}\cap \overline{XY}=\{w_j\}$. 
		\end{property}
		
		\item For each $i\in [m_{x_1}]$, we define the path $\PP_i$ as follows: 
		\begin{displaymath}
			\PP_i:= x_1\longrightarrow w_{x_1}^i; z_{x_1}^i\longrightarrow x_1\longrightarrow y_1;  x_2\longrightarrow y_2;
		w_{x_1}^i\longrightarrow x_1\longrightarrow z_{x_1}^i. \end{displaymath}
		Let $\ML_3:=\{\PP_i:i\in[m_{x_1}]\}$. Let $\PP_i\in \ML_3$, and let $A_i$ be an inner vertex of $\PP_i$. Then 
		\begin{property}
			\item[\enspace(D3)] Either $A_i\cap XY=XY$ or $A_i\cap XY=XY\setminus\{z_{x_1}^i\}$, and either $A_i\cap \overline{XY}=\emptyset$ or $A_i\cap \overline{XY}=\{w_{x_1}^i\}$, 
			and at least one of the following holds: $A_i\cap \overline{XY}=\{w_{x_1}^i\}$ or $A_i\cap XY=XY\setminus\{z_{x_1}^i\}$. 
			
		\end{property}
		
		\item For each $j\in [m_{y_1}]$, we define the path $\QQ_j$ as follows: 
\begin{displaymath}
	\QQ_j:= z_{y_1}^j\longrightarrow y_1\longrightarrow w_{y_1}^j; x_2\longrightarrow y_2; 
	x_1\longrightarrow y_1\longrightarrow z_{y_1}^j; w_{y_1}^j\longrightarrow y_1. 
\end{displaymath}
		Let  $\ML_4:=\{\QQ_j:j\in[m_{y_1}] \}$. Let $\QQ_j\in \ML_4$, and let $A_j$ be an inner vertex of $\QQ_j$. Then
		\begin{property}
			\item[\enspace(D4)] Either $A_j\cap XY=XY\setminus \{z_{y_1}^j\}$ or $A_j\cap XY=XY$, and either $A_j\cap \overline{XY}=\emptyset$
			or $A_j\cap \overline{XY}=\{w_{y_1}^j\}$, and at least one of the following holds: $A_j\cap XY=XY\setminus \{z_{y_1}^j\}$
			or $A_j\cap \overline{XY}=\{w_{y_1}^j\}$.  
		\end{property}
		
		\item For each $i\in [m_{x_2}]$, we define $\PP^*_i$  as follows: 
		\begin{displaymath}
			\PP^*_i:=x_2\longrightarrow w_{x_2}^i; z_{x_2}^i\longrightarrow x_2\longrightarrow y_2; x_1\longrightarrow y_1;
		w_{x_2}^i\longrightarrow x_2\longrightarrow z_{x_2}^i. \end{displaymath}
		Let $\ML^*_3:=\{\PP^*_i: i\in[m_{x_2}]\}$. Let $\PP^*_i\in \ML_3^*$, and let $A^*_i$ be an inner vertex of $\PP^*_i$. 
		Then
		\begin{property}
			\item[\enspace(D3*)] Either $A^*_i\cap XY=XY$ or $A_i^*\cap XY=XY\setminus\{z_{x_2}^i\}$, and either 
			$A_i^*\cap \overline{XY}=\emptyset$ or $A_i^*\cap \overline{XY}=\{w_{x_2}^i\}$, and at least one of the following holds: 
			$A_i^*\cap \overline{XY}=\{w_{x_2}^i\}$ or $A_i^*\cap XY=XY\setminus\{z_{x_2}^i\}$. 
		\end{property}

		\item For each $j\in [m_{y_2}]$, we define $\QQ^*_j$ as follows:
		\begin{displaymath}
			\QQ^*_j:= z_{y_2}^j\longrightarrow y_2\longrightarrow w_{y_2}^j; x_1\longrightarrow y_1; 
			x_2\longrightarrow y_2\longrightarrow z_{y_2}^j; w_{y_2}^j\longrightarrow y_2. 
		\end{displaymath} 
		
		Let $\ML_4^*:=\{Q^*_j:  j\in[m_{y_2}] \}$. Let $\QQ^*_j\in \ML_4^*$, and let $A^*_j$ be an inner vertex of $\QQ^*_j$, then 
		\begin{property}
			\item[\enspace(D4*)] Either $A_j^*\cap XY=XY\setminus \{z_{y_2}^j\}$ or $A^*_j\cap XY=XY$, and either $A_j^*\cap \overline{XY}=\emptyset$
			or $A_j^*\cap \overline{XY}=\{w_{y_2}^j\}$, and at least one of the following holds: $A_j^*\cap XY=XY\setminus \{z_{y_2}^j\}$
			or $A_j^*\cap \overline{XY}=\{w_{y_2}^j\}$.  
		\end{property}
	\end{enumerate}

	Let $\ML:=\ML_1\cup \ML_2\cup \ML_3\cup \ML_4\cup \ML^*_3\cup \ML^*_4$. Since
	$|\ML_1|=2, |\ML_2|=\eta, |\ML_3|=m_{x_1}, |\ML_4|=m_{y_1}, |\ML^*_3|=m_{x_2}, |\ML^*_4|=m_{y_2}$, and
	$m=2+\eta+m_{x_1}+m_{y_1}+m_{x_2}+m_{y_2}$, then in order to finish the \textsc{Step~1} of \textsc{Case~2}, 
	it is enough to show that the paths in $\ML$ are pairwise internally disjoint. 

	\begin{claim}\label{cl:first_constructions2}
		The $X-Y$ paths in $\ML$ are pairwise internally disjoint. 
	\end{claim}
\begin{prf}{\ref{cl:first_constructions2}}
We start by noting that, in some sense, the four ways in which the paths of $\mathbb{T}$ 
	were constructed in \textsc{Step 1} of \textsc{Case 1} have been ``repeated" in the construction of the paths of $\ML$. 
	This close relationship between $\mathbb{T}$ and  $\ML$ is the main ingredient in the proof of Claim~\ref{cl:first_constructions2}.
	
	Before moving on any further, let us verify  that the two paths of $\ML_1$ are internally disjoint.  
	Let $A_1$ and $A_2$ be the inner vertices of $\PP_{x_1}$ and $\PP_{x_2}$, respectively. Then $A_1=(X\setminus\{x_1\})\cup \{y_1\}$ 
	and $A_2=(X\setminus\{x_2\})\cup \{y_2\}$, and  so $A_1\neq A_2$.

	The analogies between the paths of $\mathbb{T}$ and  $\ML$ are given by the interactions that the inner vertices of the $X-Y$ paths
	have with $XY$ and $\overline{XY}'$ in the \textsc{Case 1} and with $XY$ and $\overline{XY}$ in the \textsc{Case 2}. 
	More formally, let ${\mathcal T}\in \MT$ and ${\mathcal L}\in \ML$. 
	We say that ${\mathcal T}$ and ${\mathcal L}$ are \emph{analogous}, if $A\cap \overline{XY}' = B\cap \overline{XY}$ and 
	$A\cap XY  = B \cap XY$, for any $A$ and $B$ inner vertices of ${\mathcal T}$ and ${\mathcal L}$, respectively.
	For ${\mathbb T}'\subseteq {\mathbb T}$ and ${\mathbb L}'\subseteq {\mathbb L}$ we write 
	${\mathbb T}'\sim {\mathbb L}'$ to mean that any path of ${\mathbb T}'$ is analogous to any path of ${\mathbb L}'$. 
	For instance, note that  
	$\MT_1\sim \ML_1$. Indeed, let $\PP_0\in \MT_1$ and  $\PP_{x_i}\in \ML_1$, and
	let $A_0$ and $A$ be inner vertices of $\PP_0$ and  $\PP_{x_i}$, respectively. 
	From (C1) we know that $A_0\cap XY=XY$, and from (D1) we have that $A \cap XY=XY$. Similarly, from (C1) it follows that 
	$A_0\cap \overline{XY}'= \emptyset$, and from (D1) that $A \cap \overline{XY} = \emptyset$. Analogously, we can verify that:
	
	\begin{itemize}
		\item (C1) and (D1) imply that $\MT_1\sim \ML_1$. For completeness of this list, we include this case here again.
		\item (C2), (C2.1) and (D2) imply that $\MT'_2\sim \ML_2$, where $\MT'_2$ is the subset of paths in $\MT_2$ with $w_j\neq v$. 	
		\item (C3) and (D3) imply that $\MT_3\sim \ML_3$. 
		\item (C3) and (D3*) imply that $\MT_3\sim \ML^*_3$. 		
		\item (C4) and (D4) imply that $\MT_4\sim \ML_4$. 
		\item (C4) and (D4*) imply that $\MT_4\sim \ML^*_4$. 
	\end{itemize}
	
	We recall that the strategy in the proof of Claim~\ref{cl:first_constructions} was the following. Given two 
	inner vertices $A$ and $B$ belonging to distinct paths of $\MT$, we always conclude that $A\neq B$ by showing that 
	at least one of  $A\cap \overline{XY}' \neq B\cap \overline{XY}'$ or $A\cap XY \neq B \cap XY$ holds. From this fact, 
	the definition of $\sim$, and the above list, it is not hard to see that analogous arguments as those used in the 
	proof of  Claim~\ref{cl:first_constructions} imply that the $X-Y$ paths belonging to  $\ML_1\cup \ML_2\cup \ML_3\cup\ML_4$ 
	(resp. $\ML_1\cup \ML_2\cup \ML^*_3\cup\ML^*_4$)  are pairwise internally disjoint.
	Thus, it remains to show that the paths in $\ML_3$ (resp. $\ML_4$) are pairwise internally disjoint from the paths in 
	$\ML^*_3\cup \ML^*_4$. 
	
	Let $A_i, A_j, A^*_s$, and $A^*_t$ be inner vertices of $\PP_i\in \ML_3$, $\QQ_j\in \ML_4$, $\PP^*_s\in \ML^*_3$, 
	and $\QQ^*_t\in \ML^*_4$, respectively. We analyze these cases separately.

\begin{mylist}
	\item[$\{A_i,A^*_s\}$:]  By Observation~\ref{obs:intersections}, either $z_{x_1}^i\neq z_{x_2}^s$ or $w_{x_1}^i\neq w_{x_2}^s$. \\
	Suppose that $z_{x_1}^i\neq z_{x_2}^s$. If $A_i\cap XY=XY\setminus \{z_{x_1}^i\}$ or $A^*_s\cap XY=XY\setminus \{z_{x_2}^s\}$, 
	then $A_i\cap XY\neq A^*_s\cap XY$, as required. Suppose then that 
	$A_i\cap XY=XY=A^*_s\cap XY$. From the definitions of $\PP_i$ and $\PP^*_s$ we know that $A_i=(X\setminus \{x_1\})\cup \{w_{x_1}^i\}$
	and $A^*_s=(X\setminus \{x_2\})\cup \{w_{x_2}^s\}$, and so $A_i\neq A^*_s$. \\
	Suppose now that $w_{x_1}^i\neq w_{x_2}^s$. If $A_i\cap \overline{XY}=\{w_{x_1}^i\}$ or $A^*_s\cap \overline{XY}=\{w_{x_2}^s\}$, 
	then $A_i\cap \overline{XY}\neq A^*_s\cap \overline{XY}$. Suppose  then that
	$A_i\cap \overline{XY}=\emptyset=A^*_s\cap \overline{XY}$. Again,  from the definitions of $\PP_i$ and $\PP^*_s$ 
	we have that $A_i=(Y\setminus \{z_{x_1}^i\})\cup \{x_1\}$
	and $A^*_s=(Y\setminus \{z_{x_2}^s\})\cup \{x_2\}$, and so $A_i\neq A^*_s$.  
	\item[$\{A_i,A^*_t\}$:]  Again, by Observation~\ref{obs:intersections}, we have that either $z_{x_1}^i\neq z_{y_2}^t$ or $w_{x_1}^i\neq w_{y_2}^t$. \\
	Suppose that $z_{x_1}^i\neq z_{y_2}^t$. If $A_i\cap XY=XY\setminus \{z_{x_1}^i\}$ or $A^*_t\cap XY=XY\setminus \{z_{y_2}^t\}$, 
	then (D3) and (D4*) imply $A_i\cap XY\neq A^*_t\cap XY$, as required. Suppose then that $A_i\cap XY=XY=A^*_t\cap XY$. From the definitions of 
	$\PP_i$ and $\QQ^*_t$ it follows that $A_i=(X\setminus \{x_1\})\cup \{w_{x_1}^i\}$ and $A^*_t=(Y\setminus \{y_2\})\cup \{w_{y_2}^t\}$, 
	and so $y_1\in A^*_t\setminus A_i$, which implies that $A_i\neq A^*_t$. \\
	Now suppose  that $w_{x_1}^i\neq w_{y_2}^t$. If $A_i\cap \overline{XY}=\{w_{x_1}^i\}$ or $A^*_t\cap \overline{XY}=\{w_{y_2}^t\}$, then  (D3) and (D4*)
	imply that $A_i\cap \overline{XY}\neq A^*_t\cap \overline{XY}$. Suppose then that 
	$A_i\cap \overline{XY}=\emptyset=A^*_t\cap \overline{XY}$. Again,  from the definitions of
	$\PP_i$ and $\QQ^*_t$ we have that $A_i=(Y\setminus \{z_{x_1}^i\})\cup \{x_1\}$ and $A^*_t=(X\setminus \{z_{y_2}^t\})\cup \{y_2\}$, 
	and so $y_1\in A_i\setminus A^*_t$, which implies that $A_i\neq A^*_t$.  
	\item[$\{A_j,A^*_s\}$:] This case can be handled  in the same manner as case $\{A_i,A^*_t\}$. 
	\item[$\{A_j,A^*_t\}$:] Again, this case can be handled  in the same manner as case $\{A_i,A^*_s\}$.
\end{mylist}
\end{prf}
	
	
	\subsubsection{\textsc{Step 2} of \textsc{Case 2}} 
	We recall that $\deg(X)\leq \deg(Y)$ imply that
	
	\begin{equation}\label{myquote:case_2}
			a_1+a_2+b_1+b_2\leq c_1+c_2+d_1+d_2.
	\end{equation}
	We now proceed to show that $\delta - m \leq 1$.

	\begin{claim}\label{cl:min_deg_case2} Let $\delta, m, m_{x_1}, m_{y_1}, m_{x_2}, m_{y_2},$ and $\eta$ be as above. Then, $\delta -m\leq 1$,
		and moreover, if $\delta-m= 1$ then, without loss of generality, we may assume that one of the following holds:
		\begin{enumerate}[label=(J\arabic*)]
	\item \label{J1} $a_1>c_1$, $a_2>c_2$, $b_1>d_1$, $b_2<d_2$ and exactly one of $\{x_2y_1, y_1y_2\}$ is in $T$, 
	\item \label{J2} $a_1>c_1$, $b_1>d_1$, either $a_2<c_2$ or $b_2<d_2$, and exactly one of $\{x_1x_2,y_1y_2\}$ is in $T$, 
	\item \label{J3} $a_1>c_1$, $a_2\le c_2$, $b_1\le d_1$, $b_2< d_2$ and exactly one of $\{x_1x_2, x_2y_1\}$ is in $T$,
	\item \label{J4} $a_2\le c_2$, $b_1\le d_1$, either $a_1<c_1$ or $b_2<d_2$, and $x_1y_2$ is in $T$. 
\end{enumerate}  
	\end{claim}
\begin{prf}{\ref{cl:min_deg_case2}}
We analyze several cases separately, depending on the order relations between the elements
	of the sets $\{a_i,c_i\}$ and $\{b_i,d_i\}$, for $i,j\in \{1,2\}$.  The possible cases are the following: 
	\begin{center}
		\begin{tabular}{|c | c |} 
			\hline
			(1)\quad  $a_1> c_1$, $a_2> c_2$, $b_1> d_1$ and $b_2> d_2$ &
			(9)\quad  $a_1\leq c_1$, $a_2> c_2$, $b_1>  d_1$ and $b_2>d_2$ \\ \hline 
			(2)\quad  $a_1> c_1$, $a_2> c_2$, $b_1>  d_1$ and $b_2\leq d_2$ & 
			(10)\quad $a_1\leq c_1$, $a_2> c_2$, $b_1>  d_1$ and $b_2\leq d_2$ \\ \hline 
			(3)\quad  $a_1> c_1$, $a_2> c_2$, $b_1\leq  d_1$ and $b_2>d_2$ &
			(11)\quad $a_1\leq c_1$, $a_2> c_2$, $b_1\leq  d_1$ and $b_2>d_2$ \\ \hline 
			(4)\quad  $a_1> c_1$, $a_2> c_2$, $b_1\leq  d_1$ and $b_2\leq d_2$ &
			(12)\quad $a_1\leq c_1$, $a_2> c_2$, $b_1\leq  d_1$ and $b_2\leq d_2$ \\ \hline 
			(5)\quad  $a_1> c_1$, $a_2\leq c_2$, $b_1> d_1$ and $b_2> d_2$ &
			(13)\quad $a_1\leq c_1$, $a_2\leq c_2$, $b_1> d_1$ and $b_2> d_2$  \\ \hline  
			(6)\quad  $a_1> c_1$, $a_2\leq c_2$, $b_1> d_1$ and $b_2\leq d_2$ &
			(14)\quad $a_1\leq c_1$, $a_2\leq c_2$, $b_1> d_1$ and $b_2\leq d_2$ \\ \hline 
			(7)\quad  $a_1> c_1$, $a_2\leq c_2$, $b_1\leq d_1$ and $b_2> d_2$ &
			(15)\quad $a_1\leq c_1$, $a_2\leq c_2$, $b_1\leq d_1$ and $b_2> d_2$ \\ \hline 
			(8)\quad  $a_1> c_1$, $a_2\leq c_2$, $b_1\leq d_1$ and $b_2\leq d_2$ &
			(16)\quad $a_1\leq c_1$, $a_2\leq c_2$, $b_1\leq d_1$ and $b_2\leq  d_2$ \\ \hline 
		\end{tabular}
	\end{center}
	As a first observation, the case (1) is impossible 
	because of Inequality~\ref{myquote:case_2}.
	Let us next show that it is enough to consider only six cases: (2), (4), (6), (7), (8) and (16), because each of the rest of cases is
	similar to one of these cases. 
	
	In the cases (3), (9)--(12) and (15) interchange the labels of the elements in each of the
	following sets: 
	$\{x_1,x_2\}$ and $\{y_1,y_2\}$. These interchanges automatically produce the interchange of
	the values in each of the following sets $\{a_1,a_2\}$, $\{b_1,b_2\}$, $\{c_1,c_2\}$ and $\{d_1,d_2\}$. 
	By performing these relabelings, we can see that: case (3)
	is similar to case (2), case (9) is similar to case (5), case (10) is similar
	to case (7), case (11) is similar to case (6), case (12) is similar to case (8), and case (15) is similar to case (14). 
	Thus, we may restrict our analysis to
	the cases (2), (4)--(8), (13), (14), and (16).       
	
	In the cases (5), (13) and (14) we consider the graph $F_{n-k}(T)$ instead of $F_k(T)$
	with the following relabeling. For $i\in \{1,2\}$, let $x'_i:=y_i$
	and $y'_i:=x_i$. Consider the vertices $X'=\phi(X)=V(T)\setminus X$ and 
	$Y'=\phi(Y)=V(T)\setminus Y$ in $F_{n-k}(T)$. Let $XY':=\overline{XY}$ and $\overline{XY}':=XY$, and define 
	the values $a'_i,b'_i,c'_i$ and $d'_i$
	analogously to $a_i,b_i,c_i$ and $d_i$. Then we have $a'_i=b_i$, $b'_i=a_i$, $c'_i=d_i$ and $d'_i=c_i$, 
	and so  case (5) is similar to case (2), case (13) is similar to case (4), and case (14) is similar to case (8). 
	Then, we may assume that one of cases (2), (4), (6), (7), (8) and (16) holds. 
	
	Our strategy is as follows. In any of the analyzed cases we show that $F_k(G)$ has a vertex $X_1$
	``close to" $X$ whose degree is at most $m+1$. Recall that we need to consider only the cases (2), (4), (6), (7), (8) and (16).

	\begin{itemize}
		\item[(2)]  $a_1> c_1$, $a_2> c_2$, $b_1>  d_1$ and $b_2\leq d_2$. 
		
		Then $a_1>0$ and $a_2> 0$. 
		Moreover, our suppositions and (\ref{myquote:case_2}) 
		imply that $d_2 > b_2$. 
		Let $U:=\overline{XY}\cup\{x_1,x_2,y_1,y_2\}$. From $a_1>0,a_2>0,$ and $d_2>0$ it follows that $x_1, x_2,$ 
		and $y_2$ have degree at least 2 in $T[U]$.	Since $T[U]$ is a forest, then there is a vertex 
		$u\in U\setminus \{x_1,x_2,y_1,y_2\}$ such that $deg_{T[U]}(u)\leq 1$. 
		Let $X_1:=(X\setminus \{x_1,x_2\})\cup \{y_1,u\}$. 
		
		\begin{enumerate}
			\item[(2.1)] If $y_1$ is not adjacent to neither $x_2$ nor $y_2$, then 
			\begin{align*}
				\delta\leq \deg(X_1)&\leq c_1+c_2+d_1+b_2+\eta+1+deg_{T[U]}(u)\\
				&\leq m_{x_1}+m_{x_2}+m_{y_1}+m_{y_2}+\eta+2=m.
			\end{align*}	
			
			\item[(2.2)] If $y_1$ is adjacent to some of $x_2$ or $y_2$, then it is adjacent to exactly one of them, 
			because $T$ has no cycles. Hence, in this case
			\begin{align*}
				\delta \leq \deg(X_1)&\leq c_1+c_2+d_1+b_2+\eta+2+deg_{T[U]}(u)\\
				&=m_{x_1}+m_{x_2}+m_{y_1}+m_{y_2}+\eta+3=m+1,
			\end{align*}
			and so \ref{J1} holds.
		\end{enumerate} 
		
		\item[(4)]  $a_1> c_1$, $a_2> c_2$, $b_1\leq  d_1$ and $b_2\leq d_2$. 
		
		If $d_1>0$ and $d_2>0$, then  $\deg_{T[U]}(y_i)=d_i+1\geq 2$ and $\deg_{T[U]}(x_i)=a_i+1\geq 2$, for 
		$U:=\overline{XY}\cup\{x_1,x_2,y_1,y_2\}$ and $i\in \{1,2\}$. These and the fact that $T[U]$ is a forest 
		imply the existence of two vertices 
		$u_1,u_2\in U\setminus \{x_1,x_2,y_1,y_2\}$ such that $\deg_{T[U]}(u_i)\leq 1$ for $i\in\{1,2\}$.  
		Let $X_1:=(X\setminus \{x_1,x_2\})\cup \{u_1,u_2\}$. Then
		\begin{align*}
			\delta\leq \deg(X_1)&\leq c_1+c_2+b_1+b_2+\eta+\deg_{T[U]}(u_1)+\deg_{T[U]}(u_2)\\
			&\leq m_{x_1}+m_{x_2}+m_{y_1}+m_{y_2}+\eta+2=m.
		\end{align*}
		
		We now suppose $d_1>0$ and $d_2>0$ does not hold. Then $d_1=0$ or $d_2=0$. By symmetry, we may assume that $d_1=0$. Then
		$b_1=0$, and $d_2>0$ by (\ref{myquote:case_2}).  Then for $U:=\overline{XY}\cup\{x_1,x_2,y_1,y_2\}$, we have that 
		$\deg_{T[U]}(x_i)=a_i+1\geq 2$ for $i\in \{1,2\}$ and $\deg_{T[U]}(y_2)=d_2+1\geq 2$. Since $T[U]$ has no cycles, 
		then $y_1$ is adjacent to at most one of $x_2$ or $y_2$. From this fact, $b_1=d_1=0$, and $x_1y_1\in E(T[U])$ 
		it follows that $1\leq \deg_{T[U]}(y_1)\leq 2$.
		Again, these and the fact that $T[U]$ is a forest imply the existence of two distinct vertices 
		$u_1,u_2\in U\setminus \{x_1,x_2,y_2\}$ such that $\deg_{T[U]}(u_i)\leq 1$ for $i\in \{1,2\}$. 
		Let $X_1=(X\setminus\{x_1,x_2\})\cup \{u_1,u_2\}$, then
		\begin{align*}
			\delta \leq \deg(X_1)&\leq c_1+c_2+b_1+b_2+\eta+\deg_{T[U]}(u_1)+\deg_{T[U]}(u_2)\\
			&\leq m_{x_1}+m_{x_2}+m_{y_1}+m_{y_2}+\eta+2=m.
		\end{align*}

		\item[(6)]  $a_1> c_1$, $a_2\leq c_2$, $b_1> d_1$ and $b_2\leq d_2$. 
		
		From (\ref{myquote:case_2}) and these inequalities it follows that 
		at least one of $c_2> a_2$ or $d_2>b_2$ holds.  Let $X_1:=(X\setminus\{x_1\})\cup \{y_1\}$. Since $T$ has no cycles, then it 
		contains at most one of $x_1x_2$ or $y_1y_2$.
		\begin{itemize} 
			\item[(6.1)] Suppose that none of  $x_1x_2$ or $y_1y_2$ is in $T$. Then
			\begin{align*}
				\delta\leq \deg(X_1)&\leq c_1+a_2+d_1+b_2+\eta+2\\
				&=m_{x_1}+m_{x_2}+m_{y_1}+m_{y_2}+\eta+2=m.
			\end{align*}
			\item[(6.2)] Suppose that exactly one of $x_1x_2$ or $y_1y_2$ is in $T$. Then
			\begin{align*}
				\delta\leq \deg(X_1)&\leq c_1+a_2+d_1+b_2+\eta+3\\
				&=m_{x_1}+m_{x_2}+m_{y_1}+m_{y_2}+\eta+3=m+1,
			\end{align*}
			and so \ref{J2} holds.
		\end{itemize} 
		
		\item[(7)]  $a_1> c_1$, $a_2\leq c_2$, $b_1\leq d_1$ and $b_2> d_2$. 
		
		Let $X_1:=(X\setminus\{x_1\})\cup \{y_2\}$. 
		Again, since $T$ has no cycles, then there is at most one edge in $T$ with one
		endvertex in $\{x_1,y_1\}$ and the other endvertex in $\{x_2,y_2\}$. Then
		\begin{align*}
			\delta\leq \deg(X_1)&\leq c_1+a_2+b_1+d_2+\eta+1\\
			&=m_{x_1}+m_{x_2}+m_{y_1}+m_{y_2}+\eta+1<m.
		\end{align*}
		
		\item[(8)]  $a_1> c_1$, $a_2\leq c_2$, $b_1\leq d_1$ and $b_2\leq d_2$. 
		
		As we have mentioned above, $T$ 
		has at most one edge with one end in $\{x_1,y_1\}$ and the other end in $\{x_2,y_2\}$.
		\begin{itemize} 
			\item[(8.1)] Suppose that $d_2\leq b_2+1$. Then $X_1:=(X\setminus \{x_1\})\cup \{y_2\}$ satisfies the following
			\begin{align*}
				\delta \leq \deg(X_1)&\leq c_1+a_2+b_1+d_2+\eta+1\\
				&\leq c_1+a_2+b_1+(b_2+1)+\eta+1 \\
				&\leq m_{x_1}+m_{x_2}+m_{y_1}+m_{y_2}+\eta+2=m.
			\end{align*}	
			\item[(8.2)] Suppose that $d_2\geq b_2+2$. Then $a_1>0$ and $d_2\geq 2$, and hence $x_1$ and $y_2$ have degree 
			at least 2 in $T[U]$, for $U:=\overline{XY}\cup \{x_1,y_1,y_2\}$. Since $T[U]$ is a forest, then there is a 
			vertex $u\in U\setminus \{y_1,x_1,y_2\}$    
			such that $\deg_{T[U]}(u)\leq 1$. Let $X_1:=(X\setminus\{x_1\})\cup  \{u\}$. 
			\begin{itemize}
				\item[(8.2.1)] Suppose that $x_2$ is not adjacent to neither $x_1$ nor $y_1$. Then,
				\begin{align*}
					\delta \leq \deg(X_1)&\leq c_1+a_2+b_1+b_2+\eta+2\\
					&= m_{x_1}+m_{x_2}+m_{y_1}+m_{y_2}+\eta+2=m.
				\end{align*}
				\item[(8.2.2)] Suppose that $x_2$ is adjacent to some of $x_1$ or $y_1$. Since there is at most one edge 
				with one end in $\{x_1,y_1\}$ and the other end in 
				$\{x_2,y_2\}$, then $x_2$ is adjacent to exactly one of $x_1$ or $y_1$. Then,
				\begin{align*}
					\delta\leq \deg(X_1)&\leq c_1+a_2+b_1+b_2+\eta+3\\
					&= m_{x_1}+m_{x_2}+m_{y_1}+m_{y_2}+\eta+3=m+1,
				\end{align*}
				implying that \ref{J3} holds.
			\end{itemize}
		\end{itemize} 
		\item[(16)] $a_1\leq c_1$, $a_2\leq c_2$, $b_1\leq d_1$ and $b_2\leq d_2$. 
		
		Since there is at most one edge with one end in 
		$\{x_1,y_1\}$ and the other end in $\{x_2,y_2\}$, then  $T$ contains at most one of $x_1y_2$ or $x_2y_1$. 
		\begin{itemize} 
			\item[(16.1)] Suppose that neither $x_1y_2$ nor $x_2y_1$ is in $T$. Then,	
			\begin{align*}
				\delta\leq \deg(X)&\leq a_1+a_2+b_1+b_2+\eta+2 \\
				&=m_{x_1}+m_{x_2}+m_{y_1}+m_{y_2}+\eta+2=m.
			\end{align*}		
			\item[(16.2)] Suppose that some of $x_1y_2$ or $x_2y_1$ is in $T$. Then exactly one of $x_1y_2$ or $x_2y_1$ belongs to $T$. 
			By symmetry, we may assume that $x_1$ is adjacent to $y_2$. Let $X_1:=(X\setminus\{x_1\})\cup \{y_2\}$. Then, 
			\begin{align*}
				\delta\leq \deg(X_1)&\leq c_1+a_2+b_1+d_2+\eta +1
			\end{align*}		
			\begin{itemize} 
				\item[(16.2.1)] If $a_1=c_1$ and $b_2=d_2$, then  
				\begin{displaymath}\delta\leq \deg(X_1)\leq m_{x_1}+m_{x_2}+m_{y_1}+m_{y_2}+\eta+1\leq m.
				\end{displaymath}
				
				\item[(16.2.2)]  If $a_1<c_1$ or $b_2<d_2$, then 
				\begin{align*}
					\delta\leq \deg(X)&\leq a_1+a_2+b_1+b_2+\eta +3 \\
					&=m_{x_1}+m_{x_2}+m_{y_1}+m_{y_2}+\eta+3=m+1,
				\end{align*}
				and so \ref{J4} holds.
			\end{itemize}
		\end{itemize} 
		
	\end{itemize}
	\end{prf}
	
	Claim~\ref{cl:min_deg_case2} shows that almost all $X-Y$ paths claimed by Lemma~\ref{lem:horse} are provided by 
	$\mathbb{L}$, when $|XY|=k-2$. We finish the proof of \textsc{Case 2} with the construction of the remaining $\delta-m$ $X-Y$ paths. 
	
	\begin{claim}\label{claim:end-case2}
		If  $|XY|=k-2$, then $F_k(T)$ has at least $\delta$  $X-Y$ pairwise internally disjoint paths.
	\end{claim}
\begin{prf}{\ref{claim:end-case2}}
	Consider the $m$ $X-Y$ paths of $\ML$. Clearly, if $m\geq \delta$, then we are done. Then by Claim~\ref{cl:min_deg_case2} 
	we can assume that $m+1=\delta$, and that one of \ref{J1}, \ref{J2}, \ref{J3} or \ref{J4} holds.
	
	In view of these facts,
	it is enough to exhibit a new $X-Y$ path $\PP^{\ell}\notin \ML$ with $\PP^{\ell}$ internally disjoint from any path in $\ML$.
	We note that in any of these four cases, $T$ has one edge $e$ with an endvertex in $\{x_1,y_1\}$ and the other endvertex in $\{x_2,y_2\}$. 
	Since $T$ has no cycles, then $e$ is the only edge of $T$ with this property. Then 
	$\overline{XY}(x_1), \overline{XY}(x_2), \overline{XY}(y_1),\overline{XY}(y_2), XY(x_1),\- XY(x_2), XY(y_1)$, 
	and $XY(y_2)$ are pairwise disjoint, as otherwise $T$ has a cycle. 
	
	Our strategy is as follows. First we define a set $\MP=\{\PP^1, \PP^2, \PP^3,\PP^4\}$ consisting of four new $X-Y$ paths of $F_k(T)$. 
	Then we show that for each of the four cases mentioned in previous paragraph, there is a path in $\MP$ which is internally 
	disjoint from any path of $\ML$, providing the additional required path. 
	
	\begin{enumerate}[1{.}]
	\item If $a_1>c_1$ and $d_2>b_2$, then we define the $X-Y$ path $\PP^1$ as follows: 
	\begin{displaymath}
		\PP^1:=x_1\rightarrow w_{x_1}^{a_1}; x_2\rightarrow y_2\rightarrow w_{y_2}^{d_2}; w_{x_1}^{a_1}\rightarrow x_1\rightarrow y_1;w_{y_2}^{d_2}\rightarrow y_2.
	\end{displaymath}
	From the definition of  $\PP^1$ it follows that if $A^1$ is an inner vertex of $\PP^1$, then 
	
	\begin{property}
		\item[\enspace(E1)] $A^1\cap XY=XY$, and $A^1\cap \overline{XY}\in \left\{\{w_{x_1}^{a_1}\}, \{w_{y_2}^{d_2}\}, \{w_{x_1}^{a_1},w_{y_2}^{d_2}\}\right\}$. 
	\end{property}		
	
	\item If  $a_1>c_1$ and $c_2>a_2$, then we define the $X-Y$ path $\PP^2$ as follows: 
	\begin{displaymath}
		\PP^2:=x_1\longrightarrow w_{x_1}^{a_1};x_2\longrightarrow y_2;z_{x_2}^{c_2}\longrightarrow x_2; 
		w_{x_1}^{a_1}\longrightarrow x_1\longrightarrow y_1; x_2\longrightarrow z_{x_2}^{c_2}.
	\end{displaymath}
	
	From the definition of  $\PP^2$ it follows that if $A^2$ is an inner vertex of $\PP^2$, then 
	
	\begin{property}
		\item[\enspace(E2)]  Either $A^2\cap XY=XY$ or $A^2\cap XY=XY\setminus \{z_{x_2}^{c_2}\}$, 
		and either $A^2\cap \overline{XY}=\emptyset$
		or $A^2\cap \overline{XY}=\{w_{x_1}^{a_1}\}$, and at least one of the following holds:
		$A^2\cap \overline{XY}=\{w_{x_1}^{a_1}\}$ or $A^2\cap XY=XY\setminus \{z_{x_2}^{c_2}\}$.                
	\end{property}	
	
	\item If $c_1>a_1$ and $x_1y_2\in E(T)$, then we define the $X-Y$ path $\PP^3$ as follows: 
	\begin{displaymath}
		\PP^3:=x_1\longrightarrow y_2; z_{x_1}^{c_1}\longrightarrow x_1\longrightarrow y_1; y_2\longrightarrow x_1;
	x_2\longrightarrow y_2; x_1\longrightarrow z_{x_1}^{c_1}.
	\end{displaymath}
	From the definition of  $\PP^3$ it follows that if $A^3$ is an inner vertex of $\PP^3$, then 
	
	\begin{property}
		\item[\enspace(E3)]  $A^3\cap \overline{XY}=\emptyset$, and  $A^3\cap XY\in \left\{XY, XY\setminus \{z_{x_1}^{c_1}\}\right\}$.                     
	\end{property}

	\item If $d_2>b_2$ and $x_1y_2\in E(T)$, then we define the $X-Y$ path $\PP^4$ as follows: 
	\begin{displaymath}
		\PP^4:=x_1\longrightarrow y_2\longrightarrow w_{y_2}^{d_2}; x_2\longrightarrow y_2\longrightarrow x_1\longrightarrow y_1;
	w_{y_2}^{d_2}\longrightarrow y_2.
	\end{displaymath}
	
	From the definition of  $\PP^4$ it follows that if $A^4$ is an inner vertex of $\PP^4$, then 
	\begin{property}
		\item[\enspace(E4)]  $A^4\cap XY=XY$, and $A^4\cap \overline{XY}\in\left\{\emptyset, \{w_{y_2}^{d_2}\}\right\}$.                 
	\end{property}		
\end{enumerate}
	
	We now proceed to show that for $i\in \{1,2,3,4\}$, the $X-Y$ paths in $\{\PP^i\}\cup \ML$ 
	are internally disjoint. For this, let us assume that 
	$A^1,A^2,A^3, A^4, A, A_{i,j}, A_i, A_j, A^*_s$, and $A^*_t$ are inner vertices of 
	$\PP^1, \PP^2, \PP^3, \PP^4, \PP\in \ML_1$, $\PP_{i,j}\in \ML_2$, 
	$\PP_i\in \ML_3$, $\QQ_j\in \ML_4$, $\PP^*_s\in \ML_3^*$, and $\QQ^*_t\in \ML_4^*$, respectively. 
	
		\begin{mylist}
		\item[$\{\PP^1\}\cup \ML$:] 
		We have $A\cap \overline{XY}=\emptyset$ while $A^1\cap \overline{XY}\neq \emptyset$, so $A^1\neq A$. 
		Also we have $A_{i,j}\cap XY\neq XY$ and $A^1\cap XY=XY$, thus $A^1\neq A_{i,j}$.   
		Let $\overline{XY}_1:=\{w_{x_1}^{a_1},w_{y_2}^{d_2}\}$ and 
		$\overline{XY}_2:=(\overline{XY}(x_1)\cup \overline{XY}(x_2)\cup \overline{XY}(y_1)\cup \overline{XY}(y_2)) \setminus \overline{XY}_1$. 
		Note that $\overline{XY}_1$ and $\overline{XY}_2$ are disjoint. 
		For $A'\in \{A_i,A_j,A_s^*,A_t^*\}$ we may assume that $A'\cap \overline{XY}\neq \emptyset$ (as otherwise we have
		$A'\cap \overline{XY}=\emptyset \neq A^1\cap \overline{XY}$, and so $A^1\neq A'$). Then, 
		$A^1\cap \overline{XY}\subset \overline{XY}_1$ while $A'\cap \overline{XY}\subset \overline{XY}_2$,
		since $\overline{XY}_1\cap \overline{XY}_2=\emptyset$, it follows that $A^1\neq A'$. 
		
		\item[$\{\PP^2\}\cup \ML$:] By (E2) we know that $A^2\cap XY\in \{XY, XY\setminus \{z_{x_2}^{c_2}\}\}$. 
		
		First suppose that $A^2\cap XY=XY$, so $A^2\cap \overline{XY}=\{w_{x_1}^{a_1}\}$. 
		Since $A\cap \overline{XY}=\emptyset$ we have $A^2\neq A$. Also, since $A_{i,j}\cap XY\neq XY$, we have 
		$A^2\neq A_{i,j}$. Let $\overline{XY}_1:=\{w_{x_1}^{a_1}\}$ and 
		$\overline{XY}_2:=(\overline{XY}(x_1)\cup \overline{XY}(x_2)\cup \overline{XY}(y_1)\cup \overline{XY}(y_2)) \setminus \overline{XY}_1$. 
		Note that $\overline{XY}_1$ and $\overline{XY}_2$ are disjoint.   For $A'\in \{A_i,A_j,A_s^*,A_t^*\}$ we may 
		assume that $A'\cap \overline{XY}\neq \emptyset$ (as otherwise we have
		$A'\cap \overline{XY}=\emptyset \neq A^2\cap \overline{XY}$, and so $A^2\neq A'$). Then, as in the previous case, we have 
		$A^2\cap \overline{XY}\subset \overline{XY}_1$ while $A'\cap \overline{XY}\subset \overline{XY}_2$,
		since $\overline{XY}_1\cap \overline{XY}_2=\emptyset$, it follows that $A^2\neq A'$.
		
		Suppose now that $A^2\cap XY=XY\setminus \{z_{x_2}^{c_2}\}$. We have $A\cap XY=XY$, so $A^2\neq A$. 
		Let $XY_1:=\{z_{x_2}^{c_2}\}$ and 
		$XY_2:=(XY(x_1)\cup XY(x_2)\cup XY(y_1)\cup XY(y_2))\setminus XY_1$. For $A'\in \{A_i,A_j,A_s^*,A_t^*\}$,
		if $A'\cap XY=XY$ then $A^2\neq A'$. Suppose now that $A'\cap XY\neq XY$.  
		Then, $A'\cap XY\subset XY_2$, while $A^2\cap XY=XY_1$; and since $XY_1\cap XY_2=\emptyset$
		it follows that $A^2\neq A'$.  Consider now the vertex $A_{i,j}$. 
		Note that $z_{x_2}^{c_2}\neq z_i$ or $w_{x_1}^{a_1}\neq w_j$, as otherwise the subgraph of $T$
		induced by $z_i,w_j,x_1,x_2,y_1,y_2,$ and  $e$ contains a cycle.  If $z_{x_2}^{c_2}\neq z_i$, then (D2) and (E2) imply
		$A_{i,j}\cap XY\neq A^2\cap XY$, as required. On the other hand, if $w_{x_1}^{a_1}\neq w_j$, again (D2) and (E2) imply 
		that $A_{i,j}\cap \overline{XY}\neq A^2\cap \overline{XY}$, and so $A_{i,j}\neq A^2$.
		
		\item[$\{\PP^3\}\cup \ML$:] By (E3) we have $A^3\cap \overline{XY}=\emptyset$ and $A^3\cap XY\in \{XY,XY\setminus \{z_{x_1}^{c_1}\}\}$. 
		
		First suppose that $A^3\cap XY=XY$. Then $A^3=(X\setminus \{x_1\})\cup \{y_2\}$, and so 
		$x_2,y_2\in A^3$. On the other hand, for any $A'\in \{A,A_{i,j},A_i,A_j,A_s^*,A_t^*\}$
		we have that $x_2$ and $y_2$ do not belong to $A'$ simultaneously, which implies that $A^3\neq A'$. 
		
		Suppose now that $A^3\cap XY= XY\setminus \{z_{x_1}^{c_1}\}$. In this case proceed in a similar way to
		the case $\{\PP^2\}\cup \ML$ when $A^2\cap XY=XY\setminus \{z_{x_2}^{c_2}\}$.  
		
		\item[$\{\PP^4\}\cup \ML$:] By (E4) we have $A^4\cap XY=XY$ and $A^4\cap \overline{XY}\in \{\emptyset, \{w_{y_2}^{d_2}\}\}$. 
		As a first observation, $A^4\neq A_{i,j}$ because $A_{i,j}\cap XY\neq XY$. 
		
		Suppose that $A^4\cap \overline{XY}=\emptyset$, then $A^4=(X\setminus\{x_1\})\cup \{y_2\}$, and so $x_2,y_2\in A_4$. 
		Similar to case $\{\PP_3\}\cup \ML$, for $A'\in \{A, A_i, A_j, A_s^*, A_t^*\}$
		we have that $x_2$ and $y_2$ do not belong to $A'$ simultaneously. Thus,  $A^3\neq A'$.
		
		Suppose now that $A^4\cap \overline{XY}=\{w_{y_2}^{d_2}\}$. We have $A^4\neq A$ because $A\cap \overline{XY}=\emptyset$. 
		Let $\overline{XY}_1:=\{w_{y_2}^{d_2}\}$ and $\overline{XY}_2:=(\overline{XY}(x_1)\cup \overline{XY}(x_2)\cup \overline{XY}(y_1)\cup \overline{XY}(y_2))\setminus \overline{XY}_1$. 
		Next, for $A'\in \{A_i, A_j, A_s^*, A_t^*\}$ proceed as in the case $\{\PP^1\}\cup \ML$ to show
		that $A^4\neq A'$. 
	\end{mylist}

	Summarizing: for $i\in \{1,2,3,4\}$, we have shown that if $\PP^i$ exists, then $\ML\cup \{\PP^i\}$ is a set of $\delta=m+1$ 
	pairwise internally disjoint $X-Y$ paths of $F_k(T)$. 
	It remains to show that one of $\PP^1,\PP^2,\PP^3,\PP^4$ exists. We have the following: if \ref{J1} holds, 
	then $\PP^1$ exists; if \ref{J2} holds with $a_2<c_2$ (resp. $b_2<d_2$), then $\PP^2$ (resp. $\PP^1$) exists; 
	if \ref{J3} holds, then $\PP^1$ exists; and	if \ref{J4} holds with $a_1<c_1$ (resp. $b_2<d_2$), then $\PP^3$ (resp. $\PP^4$) exists.  
\end{prf}
	
	Clearly, the proof of Claim~\ref{claim:end-case2} finishes the proof of Lemma~\ref{lem:horse}, which implies Theorem \ref{thm:main}.
\end{proof}

\section{Concluding remarks}

The trees and the complete graphs are two families of graphs which are extremely distinct from the point of view of the connectivity.  
Here we have shown that if $G$ is a tree, then $\kappa(F_k(G))=\lambda(F_k(G))=\delta(F_k(G))$. 
Surprisingly, these same equalities hold for the case of the complete graph. More precisely,  
from 
\cite{leanos2018connectivity} and ~\cite{leanos2019edgeconnectivity} we know that the connectivity and the edge-connectivity of 
$F_k(K_n)$ are equal to $\delta(F_k(K_n))$ the minimum degree of $F_k(K_n)$. However, these equalities do not hold in general. 
For instance, it is not hard to see that for the graph $H$ of Figure~\ref{last-fig}
we have $\kappa(F_2(H))=m-1=\lambda(F_2(H))$ and $\delta(F_2(H))=2(m-2)$. 

\begin{figure}[h]
	\begin{center}
	\includegraphics[width=0.26\textwidth]{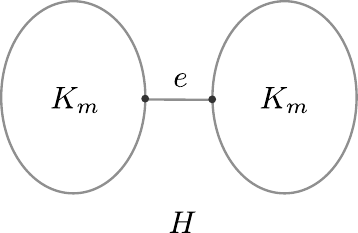}
	\caption{The graph $H$ is constructed by connecting
		two copies of $K_m$ by means of a new edge $e$. }
	\label{last-fig}
	\end{center} 
\end{figure}

On the other hand, based on
computational experimentation and on some analytic approaches we have the following conjecture.  

\begin{conj} 
	If $G$ is a connected graph with girth at least five, then $\kappa(F_k(G))=\delta(F_k(G))$, for each $k\in \{2,3,\ldots , n-2\}$.
\end{conj}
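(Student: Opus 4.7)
The plan is to extend the two-step scheme of Lemma~\ref{lem:horse} to any connected graph $G$ of girth at least five. By Proposition~\ref{prop:restriction} it suffices to exhibit $\delta(F_k(G))$ pairwise internally disjoint $X-Y$ paths whenever $d_{F_k(G)}(X,Y)=2$, and Proposition~\ref{prop:cases} splits this into the sub-cases $|X\cap Y|=k-1$ and $|X\cap Y|=k-2$ exactly as in the tree proof.

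The first observation is that the girth hypothesis supplies exactly the disjointness that the Step-1 constructions needed. In Case~1, a vertex $w\in W(x)\cap W(y)$ would complete the 4-cycle $xwyv$ with the length-2 path $xvy$ of Proposition~\ref{prop:cases}(3); similarly any $z\in Z(x)\cap Z(y)$ gives a 4-cycle. In Case~2, any coincidence among the eight sets $W(x_i),W(y_i),Z(x_i),Z(y_i)$ for $i\in\{1,2\}$ produces a 3- or 4-cycle through some of the edges $x_1y_1$, $x_2y_2$. Hence the families $\mathbb{T}_1\cup\cdots\cup\mathbb{T}_4$ and $\mathbb{L}_1\cup\cdots\cup\mathbb{L}_4^*$, together with the internal-disjointness arguments of Claims~\ref{cl:first_constructions} and~\ref{cl:first_constructions2}, transfer almost verbatim: every place where the tree proof invoked ``else $T$ has a cycle'' becomes ``else $G$ has a cycle of length at most $4$'', and the girth-$5$ hypothesis suffices.

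The real obstacle is Step~2. In the tree proof the gap $\delta-m$ was at most $2$ because the subgraphs $T[U]$ were forests and supplied vertices $u$ of low degree for rerouting; under girth $\geq 5$ this slack can be unbounded, since $\delta(F_k(G))$ grows roughly like $k(\delta(G)-k+1)$. My plan is to supply the missing $\delta-m$ paths by systematically applying the distractor construction of Section~\ref{sec:constructions}: each edge $uv$ of $G$ with $u\in X\cap Y$ and $v\notin X\cup Y$ that has not yet been used by a Step-1 path yields an $X-Y$ path $\PP_{uv}$, and two such paths $\PP_{uv}$ and $\PP_{u'v'}$ are internally disjoint from each other and from any earlier path provided that neither $\{u,v\}$ nor $\{u',v'\}$ coincides with a pair already servicing another path. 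A conflict between two candidate distractor edges can be traced, in each case, to a short cycle through $\{x,y\}$ or $\{x_1,x_2,y_1,y_2\}$; the girth-$5$ hypothesis forbids precisely those short cycles, so conflicts should be rare enough to be resolvable.

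The hardest part will be the bookkeeping that converts this into a full assignment. I expect to set up a bipartite ``demand vs.\ resource'' graph whose vertices on one side are the edges incident to $X\cup Y$ not already serviced by $\mathbb{T}$ or $\mathbb{L}$, and on the other side the available distractor edges; the constraints for pairwise internal disjointness are local (each constraint arises from a potential 3- or 4-cycle), so girth $\geq 5$ should translate into a Hall-type condition guaranteeing a perfect matching. I would first verify this strategy in the base case $k=2$, where the combinatorics collapses considerably and the Step-1 constructions alone may already suffice, and then attempt to lift the matching argument to arbitrary $k\in\{2,\ldots,n-2\}$. The principal risk is that girth $5$ is insufficient to force Hall's condition, in which case the conjecture may need strengthening (to girth $\geq 6$, say, to additionally exclude 5-cycles that mildly obstruct the matching), or one must introduce altogether new families of paths derived from cycles of $G$ of length $\geq 5$ used as extended distractors.
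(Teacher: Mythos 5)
This statement is an open conjecture in the paper --- the authors offer no proof, only computational evidence --- so there is no argument of theirs to compare yours against. What you have written is a research plan rather than a proof, and by your own admission the decisive step is missing. Your Step~1 analysis is plausible: girth at least five does force the neighbourhood sets $W(x),W(y),Z(x),Z(y)$ (and their Case~2 analogues) to be pairwise disjoint, since any coincidence yields a cycle of length at most four, so the families $\mathbb{T}$ and $\mathbb{L}$ and the disjointness arguments of Claims~\ref{cl:first_constructions} and~\ref{cl:first_constructions2} should indeed carry over. But that only produces $m$ paths, and the entire difficulty of the conjecture lives in Step~2.

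The concrete gap is that you never establish $\delta(F_k(G)) - m \leq (\text{number of additional paths you can construct})$. In the tree case this was Claim~\ref{claim:min-deg} and Claim~\ref{cl:min_deg_case2}, proved by exploiting that $T[U]$ is a forest to locate low-degree vertices $u$ and relocate tokens to them; no analogue of that degree bound is available here, and you correctly note the deficit can be unbounded. Your proposed remedy --- a bipartite demand-versus-resource graph with a Hall condition certified by the absence of $3$- and $4$-cycles --- is stated only as something that ``should'' work, and you immediately concede the principal risk that girth five is insufficient. Until that matching argument is formulated precisely (what exactly are the demand and resource sets, what are the conflict edges, and why does every subset of demands see enough resources?) and verified, the conjecture remains unproved. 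A useful first target would be to carry out your own suggestion of the case $k=2$ completely, where one can at least test whether girth five already fails or whether girth six is genuinely needed.
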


\acknowledgements
\label{sec:acknowl}
This work was initiated at the 3rd Reunion of Optimization, Mathematics, and Algorithms (ROMA 2019), held in Mexico city
in August 2019. We thank the participants for providing a fruitful research environment.
We specially thank Birgit Vogtenhuber and Daniel Perz for various helpful and insightful discussions. 

\nocite{*}
\bibliographystyle{abbrvnat}
\bibliography{conexidad}
\label{sec:biblio}

\end{document}